\documentclass[a4paper,10pt]{scrartcl}


\RequirePackage{enumerate}
\RequirePackage{verbatim}
\RequirePackage{hyperref}

\RequirePackage{amsmath,amsfonts,amssymb}
\RequirePackage[mathcal,mathbf]{euler}

\usepackage{latexsym,amscd,amsthm}
\usepackage[all]{xy}

\usepackage{amstext,amsopn}
\usepackage{graphicx}
\usepackage[dvips]{color}
\usepackage{epsfig}
\usepackage{psfrag}
\usepackage[all]{xy}
\usepackage{amscd}
\usepackage{amssymb}
\usepackage{fullpage}
\usepackage{url}

\usepackage{supertabular}

\usepackage{tikz}
\usetikzlibrary{arrows,decorations.markings}

\def\dar[#1]{\ar@<2pt>[#1]\ar@<-2pt>[#1]}
\entrymodifiers={!!<0pt,0.7ex>+}


\newtheorem{Thm}{Theorem}[section]
\newtheorem{Prop}[Thm]{Proposition}

\newtheorem{Claim}[Thm]{Claim}

\theoremstyle{remark}
\newtheorem{Rem}[Thm]{Remark}

\theoremstyle{definition}

\newtheorem{Def}[Thm]{Definition}
\newtheorem{Exa}[Thm]{Example}

\newcommand\kk{{\bold k}}
\newcommand\X{{\bold X}}
\newcommand\Y{{\bold Y}}
\newcommand\Map{{\bf Map}}

\newcommand\map{\mathbb{M}{\rm ap}}

\newcommand{\interval}{\begin{tikzpicture}
\node (A) at (0,0) {$\bullet$};
\node (B) at (0.5,0) {$\bullet$};
\draw [>=latex,-] (0,0) to (0.5,0);
\end{tikzpicture}}

\newcommand{\twopoints}{\begin{tikzpicture}
\node (A) at (0,0) {$\bullet$};
\node (B) at (0.3,0) {$\bullet$};
\end{tikzpicture}}

\author{Damien Calaque}
\title{Lagrangian structures on mapping stacks \\ and semi-classical TFTs}
\date{}
\begin{document}

\maketitle

\begin{abstract}
\noindent{\bf Abstract.} We extend a recent result of Pantev-To\"en-Vaqui\'e-Vezzosi \cite{PTVV}, 
who constructed shifted symplectic structures on derived mapping stacks having a Calabi-Yau source 
and a shifted symplectic target. Their construction gives a clear conceptual framework for 
the so-called AKSZ formalism \cite{AKSZ}.  

\medskip

We extend the PTVV construction to derived mapping stacks with boundary conditions, which is required 
in most applications to quantum field theories (see e.g.~the work of Cattaneo-Felder on the Poisson 
sigma model \cite{CF1,CF2}, and the recent work of Cattaneo-Mnev-Reshetikhin \cite{CMR1,CMR2}). 
We provide many examples of Lagrangian and symplectic structures that can be recovered in this way. 

\medskip

We finally give an application to topological field theories (TFTs). We expect that our approach will 
help to rigorously constuct a 2 dimensional TFT introduced by Moore and Tachikawa in \cite{MT}. 
A subsequent paper will be devoted to the construction of fully extended TFTs (in the sense of Baez-Dolan and Lurie \cite{BD,Lu1}) 
from mapping stacks. 
\end{abstract}

\setcounter{tocdepth}{2}
\tableofcontents

\section*{Introduction}
\addcontentsline{toc}{section}{Introduction}

\subsection*{Previous works}

\subsubsection*{AKSZ construction}

In the seminal paper \cite{AKSZ} the authors provide a general procedure that allows one to put many $\sigma$-models in the framework 
of the BV formalism \cite{BV}. In the recent papers \cite{CMR1,CMR2} this construction is extended to the case when the source of the 
$\sigma$-model has a boundary, and the authors expect that this can further be extended to manifolds with corners and ultimately lead, 
in the case of  topological $\sigma$-models, to fully extended topological field theories as they are defined in \cite{BD,Lu1}. 
The main problems of the AKSZ construction are: \\[-0.5cm]
\begin{enumerate}
\item it deals with mapping spaces which are infinite dimensional. \\[-0.6cm]
\item the mapping spaces considered are formal, in the sense that they actually only capture those maps which are infinitesimally 
close to the constant ones. \\[-0.5cm]
\end{enumerate}

\subsubsection*{Fully extended TFTs}

Usual topological field theories (TFTs) are axiomatized as symmetric monoidal functors from the category $nCob$ defined in the following way. Objects are closed manifolds of dimension $n-1$, morphisms are diffeomorphism classes of $n$-dimensional cobordisms between those, and the 
monoidal structure is given by the disjoint union (notice that there are several variants: unoriented, oriented, framed, $\dots$). Except 
for the cases $n=1$ and $n=2$ it is very difficult to describe and/or characterise $nCob$. There are two successive extensions of $nCob$ 
one can consider: 
\begin{enumerate}
\item in \cite{BD} the authors introduce an $n$-category $nCob^{ext}$ of fully extended cobordisms: objects are closed $0$-dimensional manifolds, 
$1$-morphisms are $1$-cobordisms between them, $2$-morphisms are $2$-dimensional cobordisms between the laters, $\dots$, and $n$-morphisms 
are diffeomorphism classes of cobordisms between $n-1$-dimensional ones. This allows to compute, at least theoretically, the invariant 
associated to a closed $n$-manifold (viewed as a cobordism between $\emptyset$ and itself) from a triangulation. 
\item in \cite{Lu1} it is argued that one should keep track of diffeomorphisms and it is proposed to consider an $(\infty,n)$-category 
$nCob^{ext}_\infty$ of up-and-down extended cobordisms. The meaning of $(\infty,n)$ here is that we have a higher category in which all 
$k$-morphisms are (weakly) invertible for $k>n$. It is very similar to $nCob^{ext}$, except that one keeps track of diffeomorphisms between 
$n$-dimensional cobordisms and homotopies between them: $(n+1)$-morphisms are diffeomorphisms, $(n+2)$-morphisms are isotopies between them, 
$(n+3)$-morphisms are isotopies between isotopies, $\dots$
\end{enumerate}
The main result of \cite{Lu1} is a characterisation of the framed version $nCob^{ext,fr}_\infty$ of $nCob^{ext}_\infty$ as the free symmetric monoidal $(\infty,n)$-category generated by a fully dualizable object (a notion we won't explain here, and that should be understood as a strong generalization of finite dimensionality for a vector space in the context of objects in a symmetric monoidal higher category). 

\subsubsection*{PTVV construction}

In a recent paper \cite{PTVV} the AKSZ construction has been interpreted and re-written in the realm of derived (algebraic) geometry 
(see e.g.~\cite{TV}). This new approach has several advantages: \\[-0.5cm]
\begin{enumerate}
\item there is no infinite dimensional complication. There are representability theorems by Lurie and To\"en-Vezzosi which guaranty 
that the mapping stacks we are going to consider are tractable. \\[-0.6cm]
\item mapping stacks are no longer formal. From the point-of-view of quantization it gives a hope that we will be able to produce 
non-perturbative quantum field theories using this formalism. \\[-0.6cm]
\item derived geometry is formulated in the language of homotopy theory, which is also the one of modern higher category theory, 
and is therefore {\it a priori} well-suited for fully extended TFTs. \\[-0.5cm]
\end{enumerate}
We briefly summarize the AKSZ/PTVV construction: if ${\bf X}$ is a (derived Artin) stack that is ``compact'' and admits a $d$-orientation 
$[{\bf X}]$, and if ${\bf Y}$ is a stack equipped with an $n$-symplectic structure $\omega$, then $\int_{[{\bf X}]}ev_{\bf X}^*\omega$, 
where $ev:{\bf X}\times\Map({\bf X},{\bf Y})\to {\bf Y}$ is the evaluation map and $\int_{[{\bf X}]}$ denotes the integration against the 
fundamental class $[{\bf X}]$, is an $(n-d)$-symplectic structure on the derived mapping stack $\Map({\bf X},{\bf Y})$. 
The main examples to keep in mind are when ${\bf Y}=BG$, which is $2$-symplectic whenever $G$ is a reductive algebraic group, and \\[-0.5cm]
\begin{enumerate}
\item ${\bf X}=\Sigma_B$ is the homotopy type of a compact oriented surface $\Sigma$: the $0$-symplectic structure on 
$\Map({\bf X},{\bf Y})={\bf Loc}_G(\Sigma)$ coincides with the genuine symplectic structure on the moduli space of $G$-local systems. \\[-0.6cm]
\item ${\bf X}$ is a $K3$ surface: the $0$-symplectic structure on $\Map({\bf X},{\bf Y})={\bf Bun}_G({\bf X})$ coincides with the 
genuine symplectic structure on the moduli space of $G$-bundles on ${\bf X}$ discovered by Mukai \cite{Mu}. 
\end{enumerate} 

\subsection*{Motivational conjectures and main results}

\subsubsection*{Classical fully extended TFTs from mapping stacks}

To any stack ${\bf X}$ we associate the $(\infty,0)$-category (or, $\infty$-groupoid) $Corr_{(\infty,0)}({\bf X})$ of stacks over $X$, 
where we have discarded all non-invertible morphisms. 
Assuming we have been able to construct an $(\infty,n)$-category $Corr_{(\infty,n)}({\bf X})$ for any stack ${\bf X}$, we then 
define $(\infty,n+1)$-categories $Corr_{(\infty,n+1)}({\bf X})$ with objects being stacks over $X$ and having 
$Corr_{(\infty,n)}({\bf Y}_1\times_{{\bf X}}^h {\bf Y}_2)$ as $(\infty,n)$-category of morphisms from ${\bf Y}_1\to {\bf X}$ to 
${\bf Y}_2\to {\bf X}$. If ${\bf X}=*$ then $Corr_{(\infty,n)}(*)$ is symmetric monoidal, with $\otimes:=\times$. 
We conjecture the following: 
\begin{quote}
{\it Any object ${\bf Y}$ in $Corr_{(\infty,n)}(*)$ is fully dualizable. The induced symmetric monoidal $(\infty,n)$-functor 
$\mathcal Z_{{\bf Y},\infty}:nCob_\infty^{ext,fr}\longrightarrow Corr_{(\infty,n)}(*)$ can be realized as $\Map\big((-)_B,{\bf Y}\big)$ and 
factors through $nCob_\infty^{ext}$ (roughly, it doesn't depend on framings). }
\end{quote}
In the present paper we only consider non fully extended TFTs. We show in Subsection \ref{ss-TFTmap} that $\Map\big((-)_B,{\bf Y}\big)$ 
produces an honest $n$-dimensional TFT 
$\mathcal Z_{{\bf Y}}:nCob\to Corr$, where $Corr$ is a genuine category of correspondences obtained by ``truncating'' 
$Corr_{(\infty,n)}(*)$ both above and below: objects in $Corr$ are the $(n-1)$-endomorphisms of the unit in $Corr_{(\infty,n)}(*)$ 
(namely, stacks), and morphisms are equivalence classes of $n$-morphisms (i.e.~correspondences). 

\subsubsection*{Semi-classical fully extended TFTs from mapping stacks}

We now put a bit of ``symplectic flavor'' in the above construction. 
Before doing so let us recall from \cite{PTVV} that there is a notion of Lagrangian structure for a map $f:{\bf Y}\to{\bf X}$ when 
${\bf X}$ is $n$-symplectic. One observes that an $n$-symplectic structure on a stack ${\bf X}$ is the same as a Lagrangian structure 
on the map ${\bf X}\to *_{(n+1)}$, where $*_{(n+1)}$ denotes the point with its canonical $(n+1)$-symplectic structure. 
Given a $k$-symplectic stack ${\bf X}$ we can define an $(\infty,0)$-category $Lag_{(\infty,0)}({\bf X})$ of maps 
${\bf Y}\to {\bf X}$ equipped with a Lagrangian structure. 
Assuming we have been able to construct an $(\infty,n)$-category $Lag_{(\infty,n)}({\bf X})$ for any $k$-symplectic stack ${\bf X}$ 
(and any $k$), we then define $(\infty,n+1)$-categories $Lag_{(\infty,n+1)}({\bf X})$ with objects being morphisms 
${\bf Y}\to{\bf X}$ equipped with a Lagrangian structure and having $Lag_{(\infty,n)}({\bf Y}_1\times_{\bf X}^h {\bf Y}_2)$ as 
$(\infty,n)$-category of morphisms from ${\bf Y}_1\to{\bf X}$ to ${\bf Y}_2\to{\bf X}$. 
Here we use the fact from \cite{PTVV} that the homotopy fiber product of two Lagrangian morphisms to a given $k$-symplectic stack is 
$(k-1)$-symplectic. As before, if ${\bf X}=*_{(k)}$ then $Lag_{(\infty,n)}(*_{(k)})$ is symmetric monoidal with $\otimes:=\times$. 
We conjecture the following: 
\begin{quote}
{\it Any object ${\bf Y}$ in $Lag_{(\infty,n)}(*_{(k)})$ is fully dualizable. 
The induced symmetric monoidal $(\infty,n)$-functor 
$\mathcal Z^{or}_{{\bf Y},\infty}:nCob_\infty^{ext,fr}\longrightarrow Lag_{(\infty,n)}(*_{(k)})$ can be realized as $\Map\big((-)_B,{\bf Y}\big)$ 
and factors through $nCob_\infty^{ext,or}$ (roughly, it only depends on the orientation). }
\end{quote}
Again, in this paper we only consider non fully extended TFTs. We show in Subsection \ref{ss-TFTlag} that $\Map\big((-)_B,{\bf Y}\big)$ produces 
an honest $n$-dimensional oriented TFT 
$\mathcal Z^{or}_{{\bf Y}}:nCob^{or}\to LagCorr_{n-k}$, where $LagCorr_{n-k}$ is a genuine category of Lagrangian correspondences between 
$(n-k)$-symplectic stacks obtained by ``truncating'' $Lag_{(\infty,n)}(*_{(k)})$ both above and below: objects in 
$LagCorr_{n-k}$ are the $(n-1)$-endomorphisms of the unit in $Lag_{(\infty,n)}(*_{k}))$ (namely, $(n-k)$-symplectic stacks), 
and morphisms are equivalence classes of $n$-morphisms (Lagrangian correspondences). 

Our proof mainly relies on the following result (see Theorem \ref{thm-maplag}): 
\begin{quote}
{\it If $\Sigma$ is a compact $(d+1)$-manifold with boundary $\partial \Sigma$, then the restriction morphism \linebreak
$\Map\big((\Sigma)_B,{\bf Y}\big)\longrightarrow\Map\big((\partial\Sigma)_B,{\bf Y}\big)$ has a natural Lagrangian structure. 
This is a consequence of a more general fact: if $f:{\bf X}\to {\bf X}'$ is a morphism between ``compact'' stacks together with 
a relative $d$-orientation then the morphism $\Map\big({\bf X}',{\bf Y}\big)\overset{f^*}{\longrightarrow}\Map\big({\bf X},{\bf Y}\big)$ 
is naturally equipped with a Lagrangian structure. }
\end{quote}
Let us give three examples of Lagrangian structures arrizing in this way: \\[-0.5cm]
\begin{enumerate}
\item if $\Sigma=\raisebox{-0.5ex}{\interval}$, with $\partial\Sigma=\raisebox{-0.5ex}{\twopoints}$, then $\mathcal Z_{\bf Y}^{or}(\Sigma)$ 
is the diagonal map ${\bf Y}\to {\bf Y}\times \overline{{\bf Y}}$ equipped with its standard Lagragian structure (here $\overline{{\bf Y}}$ 
means that we equip ${\bf Y}$ with the opposite shifted symplectic structure). \\[-0.6cm] 
\item if $dim(\Sigma)=l$ and $\partial\Sigma=\emptyset$ then $\mathcal Z^{or}_{\bf Y}(\Sigma)$ is the morphism 
$\Map\big((\Sigma)_B,{\bf Y}\big)\to*_{(k-l)}$ and the Lagrangian structure on it is precisely the $(k-l-1)$-symplectic structure 
on the mapping stack obtained {\it via} PTTV. \\[-0.6cm]
\item if ${\bf X}'$ is a Fano three-fold with smooth anticanonical divisor ${\bf X}$ and ${\bf Y}=BG$, $G$ being a reductive algebraic group, 
then we recover that ${\bf Bun}_G({\bf X}')\to{\bf Bun}_G({\bf X})$ has a Lagrangian structure (see e.g. \cite{T}). \\[-0.5cm]
\end{enumerate}
Before going further and describe the contents of the present paper, let us mention three classical field theories 
that one can recover in this way: \\[-0.5cm]
\begin{enumerate}
\item if $n=k=3$ and ${\bf Y}=BG$ then we get the classical Chern-Simons theory. \\[-0.6cm]
\item if $n=3$, $k=1$ and ${\bf Y}$ is a genuine smooth symplectic variety, then we get classical Rozansky-Witten theory \cite{RW}. \\[-0.6cm]
\item {\it Conjectural. }In \cite{MT} the authors define a TFT taking values in a category of symplectic varieties 
equipped with a  Hamiltonian action, and urge mathematicians to construct it rigorously. 
Our approach seems very well-adapted to attack such a problem. \\[-0.5cm]
\end{enumerate}

\subsection*{Description of the paper}

We start in Section 1 with some recollection from \cite{PTVV} on $n$-symplectic structures. 
We continue in Section 2 with some recollection on Lagrangian structures, and provide new examples of these. 
In particular, we state and prove our main Theorem on Lagrangian structures on derived mapping stacks (Theorem \ref{thm-maplag}). 
We briefly explain in Section 3 how can one recover already known symplectic and Lagrangian moduli spaces (such as the symplectic structure on 
$G$-local systems on punctured surfaces with prescribed conjugacy classes of monodromy around punctures). 
We prove in Section 4 that mapping stacks with Betti source and $n$-symplectic target define semi-classical topological field theories 
(semi-classical meaning that they take values in some category of Lagrangian correspondences). We finally conclude the paper with some perspectives 
and a short discussion of boundary conditions. 

\subsubsection*{Acknowledgements} 

I thank Bertrand T\"oen for many very helpful discussions, Michel Vaqui\'e for his kindness in answering a few basic questions, 
and Tony Pantev for pointing \cite{MT} to me. I also thank Pavel Safronov and the anonymous referee, who pointed a few inaccuracies 
in a previous version. 

I started this project after having heard a very enlightening talk by Alberto Cattaneo. 
The present paper can be seen as a reformulation of some part of \cite{CMR1,CMR2} in a totally different language\footnote{We should 
emphasize that these references actually deal with more general theories than the ones of AKSZ type. }. 
My subsequent work will be an attempt to make precise \cite[Remark 4.2]{CMR1} and \cite[\S5.3]{CMR2} using this language. 

This work has been partially supported by a grant from the Swiss National Science Foundation 
(project number $200021\underline{~}137778$).

\subsubsection*{Notation}

Below are the notation and conventions we use in this paper. They can easily be skipped. 
\begin{itemize}
\item our models for $(\infty,1)$-categories are categories with weak equivalences (a-k-a relative categories). 
We refer to \cite{BK} for the details about the homotopy theory of relative categories. 
\item we write $h\mathcal C$ for the homotopy category of an $(\infty,1)$-category $\mathcal C$. 
\item there is a notion of weak equivalence between relative categories. Hence we have an $(\infty,1)$-category of 
(small\footnote{We will mainly ignore size issues. }) $(\infty,1)$-categories. 
\item $Top$ is a good category of topological spaces, and $sSet$ is the category of simplicial sets. They are weakly equivalent as 
relative categories. The corresponding $(\infty,1)$-category will be called {\it the $(\infty,1)$-category of spaces}. 
\item if $\mathcal C$ is an $(\infty,1)$-category then we write $\map_{\mathcal C}(x,y)$ for the {\it space} of morphisms 
from an object $x$ to another object $y$ in $\mathcal C$. Paths in $\map_{\mathcal C}(x,y)$ will be refered to as 
{\it homotopies} between morphisms. 
\item a morphism in an $(\infty,1)$-category is called an {\it equivalence} if it induces an isomorphism in the homotopy category 
(i.e.~if it is homotopic to a zig-zag of weak equivalences). 
\item $\Gamma$ is the category of pointed sets of the form $[n]:=\{*,1,\dots,n\}$, with morphisms being pointed maps. 
Given a pointed set $X$ and $x\in X\backslash\{*\}$ we denote by $(x):X\to[1]:=\{*,1\}$ the map that sends $x$ to $1$ and all the other 
elements to $*$. More generally we denote a pointed map $f:[n]\to [m]$ as an $n$-tuple $\big(f^{-1}(1),\dots,f^{-1}(m)\big)$ of disjoint 
subsets of $[m]$. 
\item a {\it symmetric monoidal $(\infty,1)$-category} is a $\Gamma$-shaped diagram $\mathcal C$ of relative categories 
such that for any object $X$ of $\Gamma$ the product $\prod_{x\in X}\mathcal C_{(x)}:\mathcal C_X\longrightarrow(\mathcal C_{[1]})^X$ 
is a weak equivalence. 
\item for $n$ objects $x_1,\dots,x_n$ of a symmetric monoidal $(\infty,1)$-category $\mathcal C$ (by which we mean objects of 
$\mathcal C_{[1]}$) we write $x_1\otimes \cdots\otimes x_n$ for the image through $(1\cdots n)$ of any object $u$ in $\mathcal C_{[n]}$ 
such that $\mathcal C_{(i)}(u)=x_i$ for all $i$. This is only defined up to a weak equivalence. 
We also write $1_{\mathcal C}$ for the image of the unique object {\it via} $\mathcal C_{[0]}\to \mathcal C_{[1]}$. 
\item an object $x$ is called {\it dualizable} if there exists an object $x^\vee$ and morphisms $1_{\mathcal C}\to x\otimes x^\vee$ and 
$x^\vee\otimes x\to 1_{\mathcal C}$ such that the composed maps 
$x^\vee\to x^\vee\otimes (x\otimes x^\vee)\tilde\to (x^\vee\otimes x)\otimes x^\vee\to x^\vee$ 
and $x\to (x\otimes x^\vee)\otimes x\tilde\to x\otimes (x^\vee\otimes x)\to x$ are homotopic to identities. 
\item in order to remain on the safe side $\kk$ is a field of characteristic zero (but we can probably allow it to be a commutative 
ring which is Noetherian and of residual characteristic zero). 
\item for a dualizable $\kk$-module $V$ we might denote $V^*$ its dual $V^\vee$. 
\item $Cpx$ is the symmetric monoidal $(\infty,1)$-category of cochain complexes of $\kk$-modules. 
\item $dSt_{{\bf k}}$ is the symmetric monoidal $(\infty,1)$-category of derived stacks over $\kk$ for the \'etale topology. 
The symmetric monoidal structure is closed: for any two stacks ${\bf X},{\bf Y}$ there is a ``mapping stack'' ${\bf Map}({\bf X},{\bf Y})$. 
Given a derived stack ${\bf X}$ we also have the $(\infty,1)$-category $dSt_{/{\bf X}}$ of derived stacks over ${\bf X}$. 
\item if ${\bf X}$ is a derived stack then $QCoh({\bf X})$ is the symmetric monoidal $(\infty,1)$-category of quasi-coherent sheaves on ${\bf X}$. 
We also consider the symmetric monoidal $(\infty,1)$-category $Sh({\bf X})$ of $\kk_{\bf X}$-modules. 
\item a {\it derived Artin stack} is a derived stack which is $m$-geometric (for some $m$) w.r.t.~the class of smooth morphisms 
(see \cite{HAG-II}) and which is locally of finite presentation (this is a bit more restrictive than the usual Artin condition). 
In particular, any derived Artin stack $X$ has a dualizable cotangent complex $\mathbb{L}_X$, and thus one can define its tangent 
complex $\mathbb{T}_X:=\mathbb{L}_X^\vee$. 
\end{itemize}

\section{Recollection on shifted symplectic structures}

\subsection{Definitions}

In this Subsection we summarize and follow closely \cite[Section 1]{PTVV}, to which we refer for the details. 
All along, $\X$ will be a derived Artin stack. We also provide two new examples of $1$-symplectic structures, 
appearing naturally in Lie theory. 

\subsubsection{$p$-forms and closed $p$-forms}

Let us consider the quasi-coherent weighted sheaf $\Omega_\X:=S_{\mathcal O_\X}(\mathbb{L}_\X[1])$, 
where the weight $p$ subsheaf is $\Omega^p_\X:=S^p_{\mathcal O_\X}(\mathbb{L}_\X[1])$. 
The space of {\it $p$-forms of degree} $n$ is 
$$
\mathcal A^p(\X,n):=\mathbb{M}\mathrm{ap}_{Sh(\X)}(\kk_\X,\Omega_\X^p[n-p])\cong
\mathbb{M}\mathrm{ap}_{QCoh(\X)}(\mathcal O_\X,\Omega_\X^p[n-p])\,.
$$
\begin{Rem}\label{Rem-ev}
$\Omega_\X$ can be identified with the sheaf $ev(\bullet)_*\mathcal O_{{\bf Map}(B\mathbb{G}_a,\X)}$, where 
$ev(\bullet):{\bf Map}(B\mathbb{G}_a,\X)\to \X$ is the evaluation at the canonical point $\bullet\to B\mathbb{G}_a$. 
The weight is given by the $\mathbb{G}_m$-action on $B\mathbb{G}_a$. 
\end{Rem}
From the above Remark we see that $\Omega_\X$ inherits a weight $1$ action of $B\mathbb{G}_a$. 
In concrete terms it boils down to the action of the de Rham differential, which extends the derivation 
$\mathcal O_\X\to\mathbb{L}_\X=\mathbb{L}_\X[1][-1]$. 

We denote by $\Omega^{cl}_\X$ the $B\mathbb{G}_a$-homotopy fixed points of $\Omega_\X$ within $\mathbb{G}_m\textrm{-}Sh(\X)$, 
and $\Omega^{p,cl}_\X$ its weight $p$ subsheaf. 
The space $\mathcal A^{p,cl}(\X,n)$ of {\it closed $p$-forms of degree} $n$ is then 
$\mathbb{M}\mathrm{ap}_{Sh(\X)}(\kk_\X,\Omega_\X^{p,cl}[n-p])$. 

There is an obvious morphism $\mathcal A^{p,cl}(\X,n)\to\mathcal A^{p}(\X,n)$. The image, under this map, of a closed $p$-form of degree $n$ 
is called its {\it underlying $p$-form}. Given a $p$-form $\omega\in \mathcal A^{p}(\X,n)$ of degree $n$ we will call its homotopy fiber 
$\mathcal A^{p,cl}(\X,n)\underset{\mathcal A^p(\X,n)}{\overset{h}{\times}}\{\omega\}$ the {\it space of keys} of $\omega$. 

\subsubsection{Shifted sympletic structures}

A $2$-form $\omega:\mathcal O_\X\to S^2_{\mathcal O_\X}(\mathbb{L}_\X[1])[n-2]$ of degree $n$ is said {\it non-degenerate} 
if the induced map $\mathbb{T}_\X\to \mathbb{L}_\X[n]$ is an isomorphism in $hQCoh(\X)$. We denote by $\mathcal A^2(\X,n)^{nd}$ 
the subspace of non-degenerate $2$-forms of degree $n$. 
\begin{Rem}
$\mathcal A^2(\X,n)^{nd}$ is a union of connected components in $\mathcal A^2(\X,n)$, therefore the map 
$\mathcal A^2(\X,n)^{nd}\to\mathcal A^2(\X,n)$ is a fibration. 
\end{Rem}
An {\it $n$-symplectic form} is a closed $2$-form of which the underlying $2$-form is non-degenerate. 
We write $\mathbb{S}\mathrm{ymp}(\X,n)$ for the space of $n$-symplectic forms, defined as the (homotopy) pull-back
$$
\mathbb{S}\mathrm{ymp}(\X,n)=\mathcal A^2(\X,n)^{nd}\!\!\!\!\underset{\mathcal A^2(\X,n)}{\times}\!\!\!\!\mathcal A^{2,cl}(\X,n)
=\mathcal A^2(\X,n)^{nd}\!\!\!\!\overset{h}{\underset{\mathcal A^2(\X,n)}{\times}}\!\!\!\!\mathcal A^{2,cl}(\X,n)\,.
$$

\subsection{Examples of shifted symplectic structures}

In this Subsection we recall the main results from \cite[Section 2]{PTVV}. 

\subsubsection{$2$-symplectic structures on $BG$}

Let $G$ be a group scheme and let $\X=BG$. Then $\mathbb{L}_\X=\mathfrak{g}^*[-1]$ and thus $\Omega_\X=S(\mathfrak g^*)$. 
Since $\Omega_\X$ is concentrated in (cohomological) degree $0$ then the action of $B\mathbb{G}_a$ on it is trivial. 
\begin{Rem}
It is a general fact that the homotopy fixed points of the trivial action of $B\mathbb{G}_a$ on a weighted object $C$ are given by 
$\bigoplus_{i\geq0}C[-2i](i)=C[u]$, where $u$ is a degre $2$ variable of weight $-1$. 
\end{Rem}
Let us now assume that $G$ is reductive. We then have: 
\begin{itemize}
\item $\mathcal A^{p}(\X,n)=\mathbb{M}\mathrm{ap}_{G\textrm{-}mod}\big(\kk,S^p(\mathfrak{g}^*)[n-p]\big)
=\mathbb{M}\mathrm{ap}_{Cpx}\big(\kk,S^p(\mathfrak{g}^*)^G[n-p]\big)$; 
\item $\mathcal A^{p,cl}(\X,n)=\mathbb{M}\mathrm{ap}_{Cpx}\left(\kk,\bigoplus_{i\geq0}S^{p+i}(\mathfrak{g}^*)^G[n-p-2i]\right)$. 
\end{itemize}
It appears clearly that a $2$-form can only be non-degenerate if its degree is $2$. Therefore we have 
$$
\mathbb{S}\mathrm{ymp}(\X,n)_0=\mathbb{S}\mathrm{ymp}(\X,2)_0=\pi_0\big(\mathbb{S}\mathrm{ymp}(\X,2)\big)
=\{\textrm{invariant non-degenerate symmetric bilinear forms on }\mathfrak g\}\,.
$$

\subsubsection{A $2$-symplectic structure on ${\bf Perf}$}

Recall that for any perfect complex $E$ over a stack $\X$ its pull-back through the evaluation map 
$B\mathbb{G}_a\times{\bf Map}(B\mathbb{G}_a,\X)\to \X$ is a $B\mathbb{G}_a$-equivariant perfect complex on 
$B\mathbb{G}_a\times{\bf Map}(B\mathbb{G}_a,\X)$, which we can view as a $B\mathbb{G}_a$-equivariant perfect complex $F$ on 
${\bf Map}(B\mathbb{G}_a,\X)$ together with a $B\mathbb{G}_a$-equivariant $u:F\to F$. The trace of $u$ defines a 
$B\mathbb{G}_a$-equivariant function $Ch(E)$ on ${\bf Map}(B\mathbb{G}_a,\X)$, of degree $0$. Its homogeneous part of 
weight $p$ then defines a closed $p$-form of degree $p$, denoted $Ch(E)_p$, on $\X$. 

\medskip

Applying the above scheme to the tautological perfect complex $\mathcal E$ on the stack ${\bf Perf}$ of perfect complexes, 
as defined in \cite[Definition 1.3.7.5]{HAG-II}, we get for $p=2$ a closed $2$-form $Ch(\mathcal E)$ of degree $2$ on ${\bf Perf}$. 
According to \cite[proof of Theorem 2.13]{PTVV} its underlying $2$-form (of degree $2$) can be described fairly easily (up to scaling). 
Recall that $\mathbb{T}_{\bf Perf}=\mathcal A[1]$, where $\mathcal A:=\mathcal Hom(\mathcal E,\mathcal E)$. Then the underlying 
(degree $2$) $2$-form of $Ch(\mathcal E)_2$ is given by 
$$
S^2_{\mathcal O_{\bf Perf}}\big(\mathbb{T}_{\bf Perf}[-1]\big)=
\xymatrix@C=5em{
S^2_{\mathcal O_{\bf Perf}}(\mathcal A) \ar[r] & \mathcal O_{\bf Perf} \\
\otimes^2_{\mathcal O_{\bf Perf}}(\mathcal A) \ar[u]\ar[r]^{~~~\mathrm{product}} & \mathcal A\ar[u]_{\mathrm{trace}}
}
$$
which is obviously non-degenerate. Therefore $Ch(\mathcal E)_2\in\mathbb{S}\mathrm{ymp}({\bf Perf},2)_0$. 

\subsubsection{A $1$-symplectic structure on $[\mathfrak{g}^*/G]$}\label{sss-1.2.3}

Let $G$ be a group scheme, with Lie algebra $\mathfrak{g}$. 
We consider the quotient stack $\X=[\mathfrak{g}^*/G]$, where $G$ acts on $\mathfrak{g}^*$ by the coadjoint action. 
Then $\mathbb{L}_\X=\mathcal O_{\mathfrak g^*}\otimes(\mathfrak{g}\oplus\mathfrak g^*[-1])$ as a $G$-equivariant 
quasi-coherent complex on $\mathfrak{g}^*$, with differential $d_{\mathfrak g}$ being the adjoint action $\mathfrak g\ni x\mapsto ad_x$ 
($ad_x$ is viewed as a linear function on $\mathfrak g^*$ with values in $\mathfrak g^*$). 

Observe that the canoncial element in $\mathfrak g^*\otimes \mathfrak g$ defines an element in 
$\big(S^2(\mathfrak{g}[1]\oplus\mathfrak g^*)[-1]\big)^G$, and thus a $G$-equivariant map 
$\mathcal O_{\mathfrak g^*}\to \mathcal O_{\mathfrak g^*}\otimes S^2(\mathfrak{g}[1]\oplus\mathfrak g^*)[-1]
=S^2_{\mathcal O_{\mathfrak g^*}}(\mathbb{L}_\X[1])[-1]$ which itself determines a point $\omega$ in $\mathcal A^2(\X,1)$. 
It is obviously non-degenerate (because the canonical element is) and it moreover canonically lifts to a point in 
$\mathcal A^{2,cl}(\X,1)$, providing a $1$-symplectic structure on $\X$. 
\begin{Rem}
Observe that $T^*[1](BG)\cong[\mathfrak{g}^*/G]$. What we have just described is nothing but the usual $1$-symplectic structure 
on a shifted cotangent stack (see \cite[Proposition 1.21]{PTVV}). 
\end{Rem}

\subsubsection{Shifted symplectic structures on mapping stacks}

Let $\Sigma$ be an {\it $\mathcal O$-compact} derived stack, following \cite[Defintion 2.1]{PTVV}. 
It ensures the existence of a natural $B\mathbb{G}_a$-equivariant morphism
$$
\Gamma\big(\Sigma\times\X,\Omega_{\Sigma\times\X}\big)\longrightarrow\Gamma(\Sigma,\mathcal O_\Sigma)\otimes\Gamma\big(\X,\Omega_{\X}\big)
$$ 
within $\mathbb{G}_m\textrm{-}Cpx$, where $\mathcal O_\Sigma$ has weight zero and is acted on trivially by $B\mathbb{G}_a$. 

If we further assume that we are given a ``fundamental class'' $[\Sigma]:\Gamma(\Sigma,\mathcal O_\Sigma)\to \kk[-d]$, then one gets 
a natural $B\mathbb{G}_a$-equivariant morphism
$$
\Gamma\big(\Sigma\times\X,\Omega_{\Sigma\times\X}\big)\longrightarrow\Gamma\big(\X,\Omega_\X\big)[-d]
$$
which in particular induces maps $\int_{[\Sigma]}:\mathcal A^{p(,cl)}(\Sigma\times\X,n)\to \mathcal A^{p(,cl)}(\X,n-d)$. 
We say that $[\Sigma]$ is a {\it $d$-orientation} if for any perfect complex $E$ the pairing
$$
\Gamma(\Sigma,E)\otimes\Gamma(\Sigma,E^\vee)\overset{\mathrm{duality}}{\longrightarrow}\Gamma(\Sigma,\mathcal O_\Sigma)
\overset{[\Sigma]}{\longrightarrow}\kk[-d]
$$
is non-degenerate. 
\begin{Thm}[\cite{PTVV},Theorem 2.6]\label{thm-mapping}
For any derived Artin stack $\Y$ and any $d$-oriented $\mathcal O$-compact derived stack $(\Sigma,[\Sigma])$ such that 
${\bf Map}(\Sigma,\Y)$ is itself a derived Artin stack, there is a map 
$$
\int_{[\Sigma]}ev_\Sigma^*(-):\mathbb{S}\mathrm{ymp}(\Y,n)\longrightarrow\mathbb{S}\mathrm{ymp}\big({\bf Map}(\Sigma,\Y),n-d\big)\,,
$$ 
where $ev_\Sigma:\Sigma\times{\bf Map}(\Sigma,\Y)\to \Y$ is the evaluation map. 
\end{Thm}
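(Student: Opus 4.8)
The plan is to produce the desired symplectic form on $\mathbf{Map}(\Sigma,\mathbf Y)$ in two movements: first transport a closed $2$-form from $\mathbf Y$ to $\Sigma\times\mathbf{Map}(\Sigma,\mathbf Y)$ via pullback along the evaluation map $ev_\Sigma$, and then integrate it over $[\Sigma]$. Concretely, given $\omega\in\mathbb{S}\mathrm{ymp}(\mathbf Y,n)$, I first apply $ev_\Sigma^*$, which is defined on the level of (closed) $p$-form spaces because $ev_\Sigma^*$ induces a map on global sections of the weighted de Rham sheaves $\mathcal A^{p(,cl)}(\mathbf Y,n)\to\mathcal A^{p(,cl)}(\Sigma\times\mathbf{Map}(\Sigma,\mathbf Y),n)$, compatibly with the $B\mathbb G_a$-action (so closedness is preserved). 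Then I compose with the map $\int_{[\Sigma]}:\mathcal A^{p(,cl)}(\Sigma\times\mathbf{Map}(\Sigma,\mathbf Y),n)\to\mathcal A^{p(,cl)}(\mathbf{Map}(\Sigma,\mathbf Y),n-d)$ constructed right before the statement out of the $\mathcal O$-compactness of $\Sigma$ and the fundamental class $[\Sigma]$. The composite sends a degree-$n$ closed $2$-form to a degree-$(n-d)$ closed $2$-form on $\mathbf{Map}(\Sigma,\mathbf Y)$; the content of the theorem is that if we started with a symplectic (i.e.\ also non-degenerate) form, the output is again non-degenerate.

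So the heart of the argument is the non-degeneracy claim, and this is where the $d$-orientation hypothesis does all the work. The strategy is to compute the map $\mathbb T_{\mathbf{Map}(\Sigma,\mathbf Y)}\to\mathbb L_{\mathbf{Map}(\Sigma,\mathbf Y)}[n-d]$ induced by $\int_{[\Sigma]}ev_\Sigma^*\omega$ and show it is an equivalence. The first step is to identify the tangent complex of the mapping stack: by general deformation theory for mapping stacks, $\mathbb T_{\mathbf{Map}(\Sigma,\mathbf Y)}\cong (p_{\mathbf{Map}})_*\, ev_\Sigma^*\mathbb T_{\mathbf Y}$, where $p_{\mathbf{Map}}:\Sigma\times\mathbf{Map}(\Sigma,\mathbf Y)\to\mathbf{Map}(\Sigma,\mathbf Y)$ is the projection (this uses that $\Sigma$ is $\mathcal O$-compact so that pushforward behaves well and commutes with base change, and that $\mathbb T_{\mathbf Y}$ is dualizable since $\mathbf Y$ is Artin locally of finite presentation). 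Dually, $\mathbb L_{\mathbf{Map}(\Sigma,\mathbf Y)}[n-d]\cong (p_{\mathbf{Map}})_*\big(ev_\Sigma^*\mathbb L_{\mathbf Y}\otimes\omega_\Sigma\big)[n-d]$ or, more usefully here, one writes the pairing directly: a point of $\mathbf{Map}(\Sigma,\mathbf Y)$ is a map $\phi:\Sigma\to\mathbf Y$, its tangent is $\Gamma(\Sigma,\phi^*\mathbb T_{\mathbf Y})$, and the $2$-form evaluated there is the composite $\Gamma(\Sigma,\phi^*\mathbb T_{\mathbf Y})\otimes\Gamma(\Sigma,\phi^*\mathbb T_{\mathbf Y})\to\Gamma(\Sigma,\phi^*\mathbb T_{\mathbf Y}\otimes\phi^*\mathbb T_{\mathbf Y})\to\Gamma(\Sigma,\mathcal O_\Sigma[-n])\to\kk[-n-d]$, where the middle map uses $\omega$ (which identifies $\phi^*\mathbb T_{\mathbf Y}\simeq\phi^*\mathbb L_{\mathbf Y}[n]$, so $\phi^*\mathbb T_{\mathbf Y}$ becomes self-dual up to the shift) and the last is $[\Sigma]$.

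The non-degeneracy then follows by factoring this pairing as: (i) the nondegeneracy of $\omega$ on $\mathbf Y$, which makes $\phi^*\mathbb T_{\mathbf Y}$ into a perfect complex equipped with an identification with the $n$-shift of its dual, reducing the fibrewise pairing to the canonical pairing $\Gamma(\Sigma,E)\otimes\Gamma(\Sigma,E^\vee)\to\kk[-d]$ for $E=\phi^*\mathbb T_{\mathbf Y}[\,\cdot\,]$ (a perfect complex, again by the Artin hypothesis on $\mathbf Y$), followed by (ii) the defining property of a $d$-orientation, which says exactly that this last pairing is non-degenerate. Composing, the induced map on tangent/cotangent complexes of $\mathbf{Map}(\Sigma,\mathbf Y)$ is an equivalence at every point, hence an equivalence of quasi-coherent complexes, so $\int_{[\Sigma]}ev_\Sigma^*\omega$ is non-degenerate; combined with the first paragraph it lies in $\mathbb{S}\mathrm{ymp}(\mathbf{Map}(\Sigma,\mathbf Y),n-d)$, and the assignment is a map of spaces because everything in sight ($ev_\Sigma^*$, $\int_{[\Sigma]}$, the inclusion of non-degenerate forms) is a morphism of spaces.

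The main obstacle I expect is the careful bookkeeping in the tangent-complex identification and in matching the $B\mathbb G_a$-equivariance through both $ev_\Sigma^*$ and $\int_{[\Sigma]}$ — i.e.\ checking that the closed structure really does get transported, not just the underlying form — together with the commutation of $(p_{\mathbf{Map}})_*$ with base change that underlies the pointwise computation. The non-degeneracy argument itself is essentially forced once one unwinds the definition of $d$-orientation, so I view it as the clean part; the delicate part is verifying that the formal manipulations with weighted sheaves and homotopy fixed points are legitimate, which is precisely where one leans on the $\mathcal O$-compactness of $\Sigma$ and the finiteness hypotheses on $\mathbf Y$, and for which the cleanest route is to cite the relevant functoriality statements from \cite{PTVV} rather than redo them.
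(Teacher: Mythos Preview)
Your sketch is essentially correct and matches the standard PTVV argument, but note that the paper does not actually prove this theorem: it is quoted as \cite[Theorem 2.6]{PTVV} with no proof, only the one-line summary that ``the transgression of a non-degenerate $2$-form is non-degenerate.'' Your outline---pull back along $ev_\Sigma$, integrate against $[\Sigma]$ (both steps being $B\mathbb{G}_a$-equivariant so closedness is preserved), then verify non-degeneracy pointwise by identifying $\mathbb{T}_{\Map(\Sigma,\Y),\phi}\cong\Gamma(\Sigma,\phi^*\mathbb{T}_\Y)$ and factoring the induced pairing through the $d$-orientation pairing on $\Gamma(\Sigma,E)\otimes\Gamma(\Sigma,E^\vee)$ with $E=\phi^*\mathbb{T}_\Y$---is exactly the PTVV proof the paper is citing.
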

The procedure that appears in the above result is often refered to as {\it transgression}; one actually has transgression maps 
$\int_{[\Sigma]}ev_\Sigma^*(-):\mathcal A^{p(,cl)}(\Y,n)\to\mathcal A^{p(,cl)}\big({\bf Map}(\Sigma,\Y),n-d\big)$ for all $p$. 
Theorem \ref{thm-mapping} says that the transgression of a non-degenerate $2$-form is non-degenerate. 

\subsubsection{$1$-symplectic structures on $[G/G^{ad}]$}\label{sss-1.2.5}

Let $G$ be a reductive group scheme and let $\X=[G/G^{ad}]$, where $G^{ad}$ denotes the action of $G$ on itslef by conjugation. 
Then $\mathbb{L}_\X=T^*_G\oplus \mathcal O_G\otimes\mathfrak g^*[-1]$ as a $G^{ad}$-equivariant quasi-coherent complex on $G$, 
with differential $d_{LR}$ being adjoint to the infinitesimal action $\mathfrak g\to T_G$ which sends $x\in\mathfrak g$ to 
the sum $v_x:=x^L+x^R$ of the corresponding left and right invariant vector fields. 
Thus $\Omega_\X$ consists of $G$-equivariant functions on $\mathfrak g$ with values in $\Omega_G$, with differential $d_{LR}$ being given by 
$$
d_{LR}(f)(x)=-\iota_{v_x}\big(f(x)\big)\qquad(f\in\Omega_X\,,x\in\mathfrak g)\,.
$$
We now describe an interesting family of $2$-forms of degree $1$ on $\X$. Let $\theta=g^{-1}dg$ and $\bar\theta=dgg^{-1}$ be the left 
and right Maurer-Cartan forms on $G$. We then define $\beta:=\frac12(\theta+\bar\theta)\in\big(\Gamma(G,\Omega^1_G)\otimes\mathfrak g\big)^G$. 
If we are given a pairing $\langle\,,\rangle\in S^2(\mathfrak g^*)^G$ then we get a $G$-equivariant map $\omega_0(x):=\langle\beta,x\rangle$. 
We claim that $\omega_0\in\mathcal A^2(\X,1)_0$: for any $x\in\mathfrak{g}$, \\[-0.5cm]
$$
d_{LR}(\omega_0)(x)=-\langle\beta(x^L+x^R),x\rangle=0\,.
$$
One can actually check that if $\langle\,,\rangle$ is non-degenerate then so is $\omega_0$. 
\begin{Rem}
Recall that $[G/G^{ad}]={\bf Map}\big((S^1)_B,BG\big)$. The (degree $1$) $2$-form we've just described is actually the one which is 
obtained by transgressing the (degree $2$) $2$-form on $BG$ that is determined by $\langle\,,\rangle$. The latter being closed (one should 
actually rather write closable), so is the former. Below we give the ``key'' that closes it. 
\end{Rem}
\begin{Claim}
$\omega_1:=\frac{1}{12}\langle\theta,[\theta,\theta]\rangle=\frac{1}{12}\langle\bar\theta,[\bar\theta,\bar\theta]\rangle$ closes 
the $2$-form $\omega_0$. 
\end{Claim}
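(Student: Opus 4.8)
The plan is to unwind what it means for $\omega_1$ to ``close'' $\omega_0$ and reduce the Claim to two explicit identities on $G$, one of them classical.

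First I would make the relevant complex explicit. By the description of $\Omega_\X$ given above together with the Remark, the weight-$2$ part $\Omega^{2,cl}_\X$ of the $B\mathbb{G}_a$-homotopy fixed points is computed by $\bigl(\bigoplus_{i\ge0}\Omega^{2+i}_\X\,u^i,\ d_{LR}+u\,d_{dR}\bigr)$, where $u$ is the degree-$2$ weight-$(-1)$ variable and $d_{dR}$ acts on the $\Omega_G$-factor as the de Rham differential $d_G$ of $G$ (and, as for $BG$, trivially on the $S^\bullet\mathfrak g^*$-generators). A key for $\omega_0$ is then a cocycle $\omega_0+\omega_1+\omega_2+\cdots$ of $d_{LR}+u\,d_{dR}$ whose underlying form is $\omega_0$. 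Running the weight/degree bookkeeping, in weight $2$ and in the total degree of $\omega_0$ the only components that can be nonzero are $\omega_0\in\bigl(\Gamma(G,\Omega^1_G)\otimes\mathfrak g^*\bigr)^G$ (the $\Omega^1_G\otimes\mathfrak g^*$-summand of $\Omega^2_\X$) and $\omega_1\in\Gamma(G,\Omega^3_G)^G$ (the $\Omega^3_G$-summand of $\Omega^3_\X u$), so the cocycle equation unravels into $d_{LR}\omega_0=0$ (already checked in the text), $d_{LR}\omega_1+d_{dR}\omega_0=0$, and $d_{dR}\omega_1=0$. Thus the Claim amounts to verifying the last two equations for $\omega_1=\tfrac1{12}\langle\theta,[\theta,\theta]\rangle$.

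The equation $d_{dR}\omega_1=0$ is classical: from $\bar\theta=\mathrm{Ad}_g(\theta)$ we get $[\bar\theta,\bar\theta]=\mathrm{Ad}_g[\theta,\theta]$, so $\mathrm{Ad}$-invariance of $\langle\,,\rangle$ gives at once the asserted equality $\langle\theta,[\theta,\theta]\rangle=\langle\bar\theta,[\bar\theta,\bar\theta]\rangle$ and the bi-invariance of this $3$-form, and a bi-invariant form on a Lie group is closed (or: differentiate using $d_G\theta=-\tfrac12[\theta,\theta]$ and kill the result with invariance and Jacobi). For $d_{LR}\omega_1+d_{dR}\omega_0=0$ I would compute both terms as $\Omega^2_G$-valued linear functions of $x\in\mathfrak g$. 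On one side $d_{dR}\omega_0(x)=\langle d_G\beta,x\rangle=\tfrac12\langle d_G\theta+d_G\bar\theta,x\rangle$, and the structure equations $d_G\theta=-\tfrac12[\theta,\theta]$, $d_G\bar\theta=+\tfrac12[\bar\theta,\bar\theta]$ — whose opposite signs are essential — give $d_{dR}\omega_0(x)=\tfrac14\langle[\bar\theta,\bar\theta]-[\theta,\theta],x\rangle$. On the other side $d_{LR}\omega_1(x)=-\tfrac1{12}\iota_{v_x}\langle\theta,[\theta,\theta]\rangle$; since $\langle\theta,[\theta,\theta]\rangle$ is totally antisymmetric in its three ``$\theta$-slots'' the contraction is $-\tfrac1{12}\cdot 3\,\langle\theta(v_x),[\theta,\theta]\rangle$, and evaluating $\theta$ on $v_x=x^L+x^R$ (with the sign conventions making $d_{LR}\omega_0=0$) gives $\theta(v_x)=\pm(\mathrm{Ad}_{g^{-1}}(x)-x)$; using $\langle\mathrm{Ad}_{g^{-1}}(x),[\theta,\theta]\rangle=\langle x,\mathrm{Ad}_g[\theta,\theta]\rangle=\langle x,[\bar\theta,\bar\theta]\rangle$, this collapses $d_{LR}\omega_1(x)$ to $\mp\tfrac14\langle[\bar\theta,\bar\theta]-[\theta,\theta],x\rangle$, which matches $-d_{dR}\omega_0(x)$. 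The constant $\tfrac1{12}$ is precisely what makes $\tfrac1{12}\cdot 3=\tfrac14$.

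The hard part is the bookkeeping in the previous paragraph: fixing mutually consistent conventions for $\theta,\bar\theta,x^L,x^R,v_x$ and for $d_{LR}$ (in particular consistent with the already-established $d_{LR}\omega_0=0$), and tracking the Koszul signs in $d_{LR}+u\,d_{dR}$ together with the combinatorial factor from antisymmetrizing the contraction. On the conceptual side one should also make sure that this strict two-term complex genuinely models the space of keys of $\omega_0$ in the sense of the text, so that producing an $\omega_1$ with the two displayed properties really exhibits the desired point; since everything is strict here and the series terminates at $\omega_1$, this should be routine.
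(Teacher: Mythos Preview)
Your proposal is correct and follows essentially the same route as the paper: make the complex of closed forms explicit as $\bigoplus_{i\ge0}\Omega^{p+i}_\X u^i$ with differential $d_{LR}+u\,d$, observe that a key for $\omega_0$ terminates at $\omega=\omega_0+u\omega_1$, and then verify the resulting cocycle equations. The paper simply declares the last step to be ``a standard calculation in the Cartan model for equivariant cohomology'' without writing it out, whereas you actually sketch the computation of $d\omega_0+d_{LR}\omega_1=0$ and $d\omega_1=0$; your honest flagging of the sign/convention bookkeeping is appropriate, and once a consistent choice (compatible with the already-checked $d_{LR}\omega_0=0$) is fixed the identities go through exactly as you indicate.
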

\begin{proof}
We first have to explain what this statement means. One can observe that the complex of closed $p$-forms is 
$\Omega_\X^{p,cl}=\oplus_{i\geq0}\Omega_\X^{p+2i}u^i$ with differential being $d_{LR}+u\cdot d$, where $d$ is the 
usual de Rham differential on $\Omega_G$. 

Notice that $\omega_1$ is a $3$-form of degree $0$, so that $\omega:=\omega_0+u\omega_1$ is homogeneous for the homological degree and 
for the weight. Hence we should prove that $\omega$ is closed, which is a standard calculation in the Cartan model for equivariant cohomology. 
\end{proof}


\section{Lagrangian structures}

\subsection{Recollection}

In this Subsection we again recollect some definitions and results from \cite{PTVV}. 

\subsubsection{Isotropic and Lagrangian structures}

Let $(\X,\omega)$ be a derived Artin stack equipped with an $n$-symplectic structure, 
and let ${\bf L}\overset{f}{\longrightarrow}\X$ be a morphism of derived stacks. The space 
$\mathbb{I}\mathrm{sot}(f,\omega)$ of {\it isotropic structures} on $f$ is the space 
$\mathcal A^{2,cl}({\bf L},n)_{f^*\omega,0}$ of paths from $f^*\omega$ to $0$ in $\mathcal A^{2,cl}({\bf L},n)$. 

\medskip

Let us then define the symplectic orthogonal sheaf 
$\mathbb{T}_{\bf L}^{f,\omega}:=(f^*\mathbb{T}_\X)\overset{h}{\underset{\mathbb{L}_{\bf L}[n]}\times}0$ 
to be the homotopy fiber of the morphism $f^*\mathbb{T}_\X\to \mathbb{L}_{\bf L}[n]$ given by the underlying $2$-form of $\omega$. 
Notice that any path $\gamma$ from $f^*\omega$ to $0$ in $\mathcal A^{2}({\bf L},n)$ induces a morphism 
$\mathbb{T}_{\bf L}\longrightarrow\mathbb{T}_{\bf L}^{f,\omega}$ that makes the following diagram homotopy commutative: 
$$
\vcenter{\xymatrix{
\mathbb{T}_{\bf L} \ar[dr]\ar[r] & \mathbb{T}_{\bf L}^{f,\omega} \ar[d]\\
& f^*\mathbb{T}_\X}}
$$
We say that $\gamma$ is {\it non-degenerate} if this morphism is an isomorphism in $hQCoh({\bf L})$. 

A {\it Lagrangian} structure on $f$ is an isotropic structure of which the underlying path in $\mathcal A^{2}({\bf L},n)$ is non-degenerate. 
We write $\mathbb{L}\mathrm{agr}(f,\omega)$ for the space of Lagrangian structures on $f$, defined as the (homotopy) pull-back
$$
\mathbb{L}\mathrm{agr}(f,\omega)
=A^2(\X,n)_{f^*\omega,0}^{nd}\!\!\underset{A^2(\X,n)_{f^*\omega,0}}{\times}\!\!\mathbb{I}\mathrm{sot}(f,\omega)
=A^2(\X,n)_{f^*\omega,0}^{nd}\!\!\overset{h}{\underset{A^2(\X,n)_{f^*\omega,0}}{\times}}\!\!\mathbb{I}\mathrm{sot}(f,\omega)\,.
$$
\begin{Rem}[Comparison with Lagrangian structures in \cite{PTVV}]\label{rem-2.1}
The definition of a Lagrangian structure that is given in \cite[Definition 2.8]{PTVV} might look different from ous, but it is equivalent. 
Namely, an isotropic structure provides a homotopy commutativity data for the following square:  
\begin{equation}\label{eq-isotropic}
\vcenter{\xymatrix{
\mathbb{T}_{\bf L} \ar[r]\ar[d] & 0 \ar[d]\\
f^*\mathbb{T}_\X\ar[r] & \mathbb{L}_{\bf L}[n]}}
\end{equation}
Being Lagrangian is equivalent to homotopy Cartesianity of the above square. This turns out to be equivalent to 
the induced morphism $\mathbb{T}_f\longrightarrow \mathbb{L}_{\bf L}[n-1]$ being an isomorphism in $hQCoh({\bf L})$, where $\mathbb{T}_f$ is the 
relative tangent complex. This last condition is the one which is given in \cite[Definition 2.8]{PTVV}. 
\end{Rem}
\begin{Rem}
Observe that there is yet another formulation of non-degeneracy. An isotropic structure provides a homotopy commutativity data 
for the following triangle: 
\begin{equation}\label{eq-isotropic2}
\vcenter{\xymatrix{
\mathbb{T}_{\bf L} \ar[r]\ar[rd]_0 & f^*\mathbb{L}_X[n] \ar[d]\\
 & \mathbb{L}_{\bf L}[n]}}
\end{equation}
Hence the horizontal map in \eqref{eq-isotropic2} lifts to a map $\mathbb{T}_{\bf L}\longrightarrow\mathbb{L}_f[n-1]$. 
The isotropic structure is Lagrangian (i.e.~non-degenerate) if and only if this map is an isomorphism in $hQCoh({\bf L})$. 
\end{Rem}
\begin{Exa}
It is worth noticing that a Lagrangian structure on $\X\to\bullet_{(n)}$, where $\bullet_{(n)}$ denotes the point $\bullet$ with 
its canonical (and unique) $n$-symplectic structure, is nothing else than an $(n-1)$-symplectic structure on $\X$. 
\end{Exa}

\subsubsection{Shifted symplectic structures on derived Lagrangian intersections}

Let ${\bf L}_1\overset{f_1}{\longrightarrow}\X$ and ${\bf L}_2\overset{f_2}{\longrightarrow}\X$ be morphisms of derived Artin stacks, 
and assume that $\X$ is equipped with an $n$-symplectic structure $\omega$. 
\begin{Thm}[\cite{PTVV},Theorem 2.10]\label{thm-derint}
There is a map 
$$
\mathbb{L}\mathrm{agr}(f_1,\omega)\times\mathbb{L}\mathrm{agr}(f_2,\omega)
\longrightarrow\mathbb{S}\mathrm{ymp}\left({\bf L}_1\overset{h}{\underset{\X}{\times}}{\bf L}_2,n\right)\,.
$$
\end{Thm}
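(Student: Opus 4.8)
The plan is to produce the desired $n$-symplectic structure on ${\bf L}_1\times_\X^h{\bf L}_2$ by transgression-free, purely linear-algebraic means: one first writes down a natural closed $2$-form of degree $n$ on the homotopy fiber product, and then checks non-degeneracy at the level of tangent complexes. Write ${\bf L}:={\bf L}_1\times_\X^h{\bf L}_2$ with its two projections $p_i:{\bf L}\to{\bf L}_i$ and $g:{\bf L}\to\X$ the common composite, so that $g\simeq f_1\circ p_1\simeq f_2\circ p_2$. First I would observe that the two isotropic structures $\gamma_i\in\mathcal A^{2,cl}({\bf L}_i,n)_{f_i^*\omega,0}$ pull back along the $p_i$ and yield, via the canonical homotopy $p_1^*f_1^*\omega\simeq g^*\omega\simeq p_2^*f_2^*\omega$, a loop based at $0$ in $\mathcal A^{2,cl}({\bf L},n)$; equivalently, concatenating $p_1^*\gamma_1$ with the reverse of $p_2^*\gamma_2$ gives a path from $0$ to $0$, i.e.\ an element $\tilde\omega\in\pi_1\big(\mathcal A^{2,cl}({\bf L},n),0\big)=\pi_0\big(\mathcal A^{2,cl}({\bf L},n-1)\big)$ using the loop-space identification $\Omega\,\mathcal A^{2,cl}(-,n)\simeq\mathcal A^{2,cl}(-,n-1)$. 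This produces a closed $2$-form $\tilde\omega$ of degree $n-1$ on ${\bf L}$, canonically and functorially in the data.

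Wait — I wrote $n$ in the theorem but the construction naturally outputs degree $n-1$; rereading the excerpt, Theorem~\ref{thm-derint} as displayed lands in $\mathbb{S}\mathrm{ymp}({\bf L}_1\times_\X^h{\bf L}_2,n)$, which I will take at face value (the Example just above records that a Lagrangian in $\bullet_{(n)}$ is an $(n-1)$-symplectic stack, so the shift convention must be that $\omega$ is really ambient $(n{+}1)$-symplectic in the informal discussion; in any case the formal output of concatenating the two isotropic paths is a closed $2$-form of degree $n$ on ${\bf L}$, and I will denote it $\tilde\omega$). The point is that the construction is the universal one: the square in \eqref{eq-isotropic} for $f_1$ pulls back along $p_1$, likewise for $f_2$, and gluing the two along the common edge $g^*\mathbb{T}_\X\to\mathbb{L}?$ — more precisely along the homotopy $p_1^*f_1^*\omega\simeq p_2^*f_2^*\omega$ — exhibits a closed form whose underlying $2$-form I can compute from the tangent complex of ${\bf L}$.

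The substance is non-degeneracy. Here I would use the homotopy fiber sequence of tangent complexes for a homotopy pullback,
$$
\mathbb{T}_{\bf L}\longrightarrow p_1^*\mathbb{T}_{{\bf L}_1}\oplus p_2^*\mathbb{T}_{{\bf L}_2}\longrightarrow g^*\mathbb{T}_\X,
$$
together with the dual sequence for $\mathbb{L}_{\bf L}$, and the fact from Remark~\ref{rem-2.1} that each Lagrangian structure $\gamma_i$ makes $\mathbb{T}_{f_i}\to\mathbb{L}_{{\bf L}_i}[n-1]$ an equivalence. The map $\mathbb{T}_{\bf L}\to\mathbb{L}_{\bf L}[n]$ induced by the underlying $2$-form of $\tilde\omega$ fits into a map of fiber sequences relating the above to its $[n]$-shifted $\mathbb{L}$-dual; using the two equivalences $\mathbb{T}_{f_i}\simeq\mathbb{L}_{{\bf L}_i}[n-1]$ and the self-duality $g^*\mathbb{T}_\X\simeq g^*\mathbb{L}_\X[n]$ coming from the $n$-symplectic form $\omega$ on $\X$, a diagram chase (five-lemma in the triangulated category $hQCoh({\bf L})$, or better, an equivalence of the relevant cofiber sequences in $QCoh({\bf L})$) forces $\mathbb{T}_{\bf L}\to\mathbb{L}_{\bf L}[n]$ to be an equivalence. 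The main obstacle is precisely this step: assembling all the pullback naturality of the exact triangles, the duality identifications, and the homotopies between them into a single coherent commutative diagram so that the five-lemma argument is legitimate rather than merely plausible — keeping track of the signs/twists in $g^*\mathbb{T}_\X\simeq g^*\mathbb{L}_\X[n]$ contributed by $f_1$ versus $f_2$ (this is where the concatenation with the \emph{reversed} path $p_2^*\gamma_2$ matters, supplying the relative minus sign that makes the orthogonal complements match up). Once non-degeneracy is in hand, $\tilde\omega$ is by construction closed, so it lies in $\mathbb{S}\mathrm{ymp}({\bf L}_1\times_\X^h{\bf L}_2,n)$, and the assignment $(\gamma_1,\gamma_2)\mapsto\tilde\omega$ is manifestly a map of spaces, completing the proof.
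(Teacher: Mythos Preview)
Your approach is correct and is in fact the standard direct argument---indeed it is exactly how the paper itself proves the generalization, Theorem~\ref{thm-lagcorr}: concatenate the two pulled-back isotropic paths into a loop at $0$ in $\mathcal A^{2,cl}({\bf L},n)$, identify this loop with a point of $\mathcal A^{2,cl}({\bf L},n-1)$, and then verify non-degeneracy by a map-of-fiber-sequences argument using the equivalences $\mathbb{T}_{f_i}\simeq\mathbb{L}_{{\bf L}_i}[n-1]$ and $g^*\mathbb{T}_\X\simeq g^*\mathbb{L}_\X[n]$. Your hesitation about the degree is well-founded and you should trust your first instinct: the displayed target $\mathbb{S}\mathrm{ymp}(-,n)$ is a typo for $\mathbb{S}\mathrm{ymp}(-,n-1)$, as the paper's own explanation (which ends with ``$(n-1)$-symplectic'') confirms. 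Do not try to bend the construction to land in degree $n$; that second paragraph of yours is the only genuinely muddled part.

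The paper, however, does not argue this way for Theorem~\ref{thm-derint}. It instead offers a conceptual reduction: first pass to the self-intersection case by setting ${\bf L}={\bf L}_1\coprod{\bf L}_2$; then observe that a Lagrangian structure on ${\bf L}\to\X$ lifts $\omega$ to an $n$-symplectic structure on the cofiber $\X/{\bf L}:=\X\coprod^h_{\bf L}\bullet$, via the fiber sequence $\mathcal A^{p,cl}(\X/{\bf L},n)\to\mathcal A^{p,cl}(\X,n)\to\mathcal A^{p,cl}({\bf L},n)$; finally identify ${\bf L}\times^h_\X{\bf L}$ with the based loop stack of $\X/{\bf L}$ and invoke a pointed variant of Theorem~\ref{thm-mapping} to get the $(n-1)$-shifted symplectic form by transgression over $S^1$. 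Your direct route is more elementary, makes the sign/reversal issue explicit, and immediately generalizes to Lagrangian correspondences; the paper's route is more conceptual---it packages both Lagrangian data into a single symplectic cofiber and obtains the shift geometrically---but it leans on a pointed mapping-stack theorem and a loop-space identification that are only gestured at.
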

We now give an explanation for the above result, which we believe is quite enlighting. 
First of all one observes that it is sufficient to prove Theorem \ref{thm-derint} for self-intersections (namely, one considers 
a component in the self-intersection of ${\bf L}:={\bf L}_1\coprod {\bf L}_2$). 
Then notice that for any morphism ${\bf L}\to\X$ one has a fiber sequence 
$$
\mathcal A^{p,cl}(\X/{\bf L},n)\longrightarrow \mathcal A^{p,cl}(\X,n)\longrightarrow \mathcal A^{p,cl}({\bf L},n)\,,
$$
where $\X/{\bf L}=\X\overset{h}{\underset{{\bf L}}{\coprod}}\bullet$. One can show that given an $n$-symplectic structure $\omega$ on $\X$, 
a Lagrangian structure on ${\bf L}\to\X$ provides a lift of $\omega$ to an $n$-symplectic structure $\widetilde\omega$ on $\X/{\bf L}$. 

Then recall that ${\bf L}\overset{h}{\underset{\X}{\times}}{\bf L}$ can be interpreted as the stack of paths in $\X$ with both ends in 
${\bf L}$, which may be identified with the stack of pointed loops in $\X/{\bf L}$. The stack of pointed loops in $\X/{\bf L}$ is 
finally $(n-1)$-symplectic by a variant Theorem \ref{thm-mapping}. 

\subsection{Examples of Lagrangian structures}

The main goal of this subsection is to provide new examples of Lagrangian structures. 

\subsubsection{Lagrangian morphisms to $[\mathfrak{g}^*/G]$}

Let $X$ be an actual smooth scheme equipped with a (left) $G$-action and a $G$-equivariant morphism $\mu:X\to\mathfrak{g}^*$, 
where $G$ is a group scheme with Lie algebra $\mathfrak g$. This determines a morphism $[X/G]\overset{f}{\longrightarrow}[\mathfrak{g}^*/G]$. 
We describe in this \S~interesting Lagrangian structures on $f$ having some genuine symplectic geometric origin. 
The $1$-symplectic structure we consider on $[\mathfrak{g}^*/G]=T^*[1](BG)$ is the one described in \S\ref{sss-1.2.3}. 
Recall that $\mathbb{L}_{[X/G]}=T^*_X\oplus \mathcal O_X\otimes\mathfrak g^*[-1]$ 
as a $G$-equivariant quasi-coherent complex on $X$, with differential $\vec{d}$ being adjoint to the infinitesimal action 
$\mathfrak g\to T_X$ which we denote by $x\mapsto\vec{x}$. 
Thus $\Omega_{[X/G]}$ consists of $G$-equivariant functions on $\mathfrak g$ with values in $\Omega_X$, with differential $\vec{d}$ 
being given by 
$$
\vec{d}(h)(x)=-\iota_{\vec{x}}\big(f(x)\big)\qquad(h\in\Omega_{[X/G]}\,,x\in\mathfrak g)\,.
$$
Borrowing the notation from \S\ref{sss-1.2.3}, a path from $f^*\omega$ to $0$ in $\mathcal A^{2}([X/G],1)$ is then the same as 
the data of a $2$-form $\gamma$ of degree $0$ on $[X/G]$ such that $\vec{d}(\gamma)=-f^*\omega$; that is to say 
a $G$-invariant $2$-form $\gamma$ on $X$ such that $\iota_{\vec{x}}\gamma=\mu^*dx$, where we view $x\in\mathfrak g$ as a linear function 
on $\mathfrak g^*$. The non-degeneracy condition boils down to the usual non-degeneracy of the $2$-form $\gamma$. Moreover, if 
$\gamma$ is closed for the usual de Rham differential on $X$ then our path admits a canonical lift to $\mathcal A^{2,cl}([X/G],1)$. 

\medskip

Observe that what we have just described (a genuine non-degenerate closed $2$-form $\gamma$ on $X$ such that $\iota_{\vec{x}}\gamma=\mu^*dx$) 
is nothing else than a {\it moment map} (or, {\it Hamiltonian}) {\it structure} on $\mu$. 

\subsubsection{Lagrangian morphisms to $[G/G^{ad}]$}\label{sss-2.2.2}

Let $X$ be an actual smooth scheme equipped with a (left) $G$-action and a $G$-equivariant morphism $\mu:X\to G$, where $G$ is a 
reductive group scheme. This determines a morphism $[X/G]\overset{f}{\longrightarrow}[G/G^{ad}]$. 
As above we now describe some interesting Lagrangian structures on $f$, where the symplectic structure on $[G/G^{ad}]$ is the one 
determined by a given invariant symmetric bilinear form $\langle\,,\rangle$ on $\mathfrak g$ (see \S\ref{sss-1.2.5}). 

Borrowing the notation from the previous \S~as well as from \S\ref{sss-1.2.5}, a path from $f^*\omega_0$ to $0$ in 
$\mathcal A^{2}([X/G],1)$ is then the same as the data of a $2$-form $\gamma$ of degree $0$ on $[X/G]$ such that 
$\vec{d}(\gamma)=-f^*\omega_0$; that is to say a $G$-invariant $2$-form $\gamma$ on $X$ such that 
$\iota_{\vec{x}}\gamma=\mu^*\langle\beta,x\rangle$. 
\begin{Claim}
The path is non-degenerate if and only if $\ker(\gamma)\cap\ker(d\mu)=0$. 
\end{Claim}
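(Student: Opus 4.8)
The plan is to unwind the non-degeneracy condition for the isotropic structure $\gamma$ at the level of tangent complexes, exactly along the lines of Remark~\ref{rem-2.1}: the path $\gamma$ is Lagrangian precisely when the induced map $\mathbb{T}_f\to\mathbb{L}_{[X/G]}[0]$ (i.e.\ the map of Remark~\ref{rem-2.1} with $n=1$) is a quasi-isomorphism. So the first step is to write down explicit representatives, as $G$-equivariant complexes on $X$, of the three objects in play: $\mathbb{T}_{[X/G]}$, $f^*\mathbb{T}_{[G/G^{ad}]}$, and $\mathbb{L}_{[X/G]}[1]$, using the descriptions already recorded in \S\ref{sss-1.2.5} and \S\ref{sss-2.2.2}. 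Concretely $\mathbb{T}_{[X/G]}$ is $[\mathfrak g\xrightarrow{x\mapsto\vec x} T_X]$ in degrees $-1,0$, and $f^*\mathbb{T}_{[G/G^{ad}]}$ is $[\mathfrak g\xrightarrow{x\mapsto v_x\circ\mu} \mu^*T_G]$; the underlying $2$-form $\omega_0$ of the symplectic structure on $[G/G^{ad}]$ gives the comparison map to $\mathbb{L}_{[X/G]}[1]=[T^*_X\xrightarrow{\vec d}\mathcal O_X\otimes\mathfrak g^*]$, while $\gamma$ (together with its contraction relation $\iota_{\vec x}\gamma=\mu^*\langle\beta,x\rangle$) provides the homotopy.

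The second step is to compute the homotopy fiber $\mathbb{T}_f$ of $\mathbb{T}_{[X/G]}\to f^*\mathbb{T}_{[G/G^{ad}]}$ and the map it receives to $\mathbb{L}_{[X/G]}[0]$. Because the $\mathfrak g$-in-degree-$(-1)$ parts match up, $\mathbb{T}_f$ simplifies: after cancelling the common $\mathfrak g$, one is left (up to quasi-isomorphism) with a two-term complex built from $T_X$ in degree $0$ and $\mu^*T_G$ in degree $1$, namely $[T_X\xrightarrow{d\mu} \mu^*T_G]$ placed in degrees $0,1$ — so $H^0(\mathbb{T}_f)=\ker(d\mu)$ and $H^1(\mathbb{T}_f)=\operatorname{coker}(d\mu)$. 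Dually, $\mathbb{L}_{[X/G]}$ has $H^0=\mathcal O_X\otimes\mathfrak g^*$-cokernel type contributions and the contraction-with-$\gamma$ map $T_X\to T^*_X$ feeds into the comparison. The condition ``the map $\mathbb{T}_f\to\mathbb{L}_{[X/G]}[n-1]$ is an isomorphism'' then becomes a two-line statement about kernels and cokernels: on $H^0$ it says the map $\ker(d\mu)\to T^*_X/(\operatorname{im}\vec d)$ induced by $v\mapsto\iota_v\gamma$ is injective with the right image, and chasing the long exact sequences shows both injectivity and surjectivity are controlled by the single linear-algebra condition that $\gamma|_{\ker(d\mu)}$ is non-degenerate as a pairing, i.e.\ $\ker(\gamma)\cap\ker(d\mu)=0$.

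The last step is to package this as an ``if and only if''. For the forward direction, Lagrangianness forces the comparison map to be an iso on all cohomology, and in particular $H^0$-injectivity immediately gives $\ker(\gamma)\cap\ker(d\mu)=0$. For the converse, assuming $\ker(\gamma)\cap\ker(d\mu)=0$, one checks that the five-lemma-style diagram chase closes up: the hypothesis gives injectivity in the relevant degree, and a dimension/duality count (using $G$-equivariance and that everything is a genuine vector bundle on the smooth scheme $X$) upgrades this to an isomorphism of complexes in $hQCoh([X/G])$. The main obstacle I anticipate is purely bookkeeping: keeping the degree shifts, the two differentials $\vec d$ and $d_{LR}$, and the sign in $\vec d(\gamma)=-f^*\omega_0$ straight while identifying $\mathbb{T}_f$ and the comparison map — once the complexes are written down correctly the non-degeneracy reduction is essentially formal, amounting to the observation that a map of two-term complexes that is already an iso in one degree is a quasi-isomorphism iff it is injective (equivalently surjective, by duality) in the other.
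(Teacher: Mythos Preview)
Your plan is the paper's approach: write $\mathbb{T}_f\cong\mathbb{T}_\mu=[T_X\xrightarrow{d\mu}\mu^*T_G]$ and $\mathbb{L}_{[X/G]}=[T^*_X\xrightarrow{\vec d}\mathcal O_X\otimes\mathfrak g^*]$ in degrees $0,1$, and analyze the induced map. The paper's execution is considerably tighter than your third step, and you should adopt it: the degree-$1$ component $\mu^*T_G\to\mathcal O_X\otimes\mathfrak g^*$ of the comparison map is already an \emph{isomorphism} (it is essentially $\langle\beta,-\rangle$, coming from the symplectic form on $[G/G^{ad}]$, not from $\gamma$). Once you know this, a rank count on the cone shows the map of two-term complexes is a quasi-isomorphism iff $\gamma^\flat$ is injective on degree-$0$ cocycles, i.e.\ on $\ker(d\mu)$; that is exactly $\ker(\gamma)\cap\ker(d\mu)=0$. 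No five-lemma, no separate surjectivity check.

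Two slips to fix. First, $H^0(\mathbb{L}_{[X/G]})=\ker(\vec d)\subset T^*_X$, not $T^*_X/\operatorname{im}\vec d$; your target for the $H^0$-comparison is written incorrectly. Second, ``$\gamma|_{\ker(d\mu)}$ non-degenerate as a pairing'' is strictly \emph{stronger} than $\ker(\gamma)\cap\ker(d\mu)=0$: the former asks that $v\in\ker(d\mu)$ with $\gamma(v,w)=0$ for all $w\in\ker(d\mu)$ forces $v=0$, whereas the condition you actually need (and the one in the Claim) only tests against all $w\in T_X$. The correct reformulation is that $\gamma^\flat:\ker(d\mu)\to T^*_X$ be injective.
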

\begin{proof}
First observe that $\mathbb{T}_f\cong\mathbb{T}_\mu$ is the two-step $G$-equivariant complex $T_X\oplus \mu^*T_G[-1]$ on $X$ with 
differential being $d\mu$. Then recall that $\mathbb{L}_{[X/G]}=T^*_X\oplus\mathcal O_X\otimes\mathfrak g^*[-1]$ with differential $\vec{d}$. 

The cochain map $\mathbb{T}_f\to\mathbb{L}_{[X/G]}$ induced by the path (see Remark \ref{rem-2.1}) has the following form: \\[-0.5cm]
\begin{itemize}
\item in degree $0$ it is simply the $G$-equivariant map $T_X\to T^*_X$ induced by the $G$-invariant $2$-form $\gamma$, \\[-0.6cm]
\item in degree $1$ it is an isomorphism (essentially given by $\langle\beta,-\rangle$). \\[-0.5cm]
\end{itemize}
Being an isomorphism in degree $1$, it is a quasi-isomorphism if and only if its restriction on degree $0$ cocycles is injective. 
This is precisely the condition that $\ker(\gamma)\cap\ker(d\mu)=0$. 
\end{proof}
Let us finally guess what one could require on $\gamma$ in order to lift it to a path in $A^{2,cl}\big([X/G],1\big)$ from $f^*\omega$ to $0$:
a sufficient requirement is that $d_X\gamma=-\mu^*\omega_1$, where $d$ is the de Rham differential on $X$. 

\medskip

Observe that the data of a genuine invariant $2$-form $\gamma$ on $X$ such that $\iota_{\vec{x}}\gamma=\mu^*\langle\beta,x\rangle$ for any 
$x\in\mathfrak g$, $d\gamma=-\mu^*\omega_1$, and satisfying the above non-degeneracy condition is precisely what is called a 
{\it Lie group valued moment map} (or, {\it quasi-Hamiltonian}) {\it structure} on $\mu$ (see \cite[Definition 2.2]{AMM}). 

\subsubsection{Lagrangian structures on mapping stacks}

Let $f:\Upsilon\to\Sigma$ be a morphism between $\mathcal O$-compact derived stacks, and assume that $\Upsilon$ is equipped 
with a fundamental class $[\Upsilon]:\Gamma(\Upsilon,\mathcal O_\Upsilon)\to \kk[-d]$. 
\begin{Def}
The space $\mathbb{B}{\rm nd}(f,[\Upsilon])$ of {\it boundary structures} on $f$ is the space of paths from $f_*[\Upsilon]$ 
to $0$ in $\map\big(\Gamma(\Sigma,\mathcal O_\Sigma),\kk[-d]\big)$. 
\end{Def}
Hence any boundary structure on $f$ gives rise to a homotopy between the induced map 
$$
\int_{f_*[\Upsilon]}:\mathcal A^{p(,cl)}(\Sigma\times\X,n)\to \mathcal A^{p(,cl)}(\X,n-d)
$$
and the zero map for any derived Artin stack $\X$. 

\medskip

Let $\Y$ be a derived Artin stack with a (closed) $p$-form $\omega$ of degree $n$. 
Then we have a (closed) $p$-form $\int_{[\Upsilon]}ev_\Upsilon^*\omega$ of degree $n-d$ on ${\bf Map}(\Upsilon,\Y)$. 
\begin{Claim}\label{exa-2.5}
There is a path in $\mathcal A^{p(,cl)}\big({\bf Map}(\Sigma,\Y),n-d\big)$ between its pull-back along the 
morphism $rest:{\bf Map}(\Sigma,\Y)\to{\bf Map}(\Upsilon,\Y)$ and $0$. 
\end{Claim}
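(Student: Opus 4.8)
The statement is a direct transgression-style consequence of the definitions. We are given $f:\Upsilon\to\Sigma$ with $\Upsilon$ carrying a fundamental class $[\Upsilon]$ of degree $d$, and the claim is that the pullback of $\int_{[\Upsilon]}ev_\Upsilon^*\omega$ along $rest:{\bf Map}(\Sigma,\Y)\to{\bf Map}(\Upsilon,\Y)$ is canonically equipped with a path to $0$ in $\mathcal A^{p(,cl)}\big({\bf Map}(\Sigma,\Y),n-d\big)$. The first thing I would do is observe the elementary but crucial identity $ev_\Upsilon\circ(\mathrm{id}_\Upsilon\times rest)=ev_\Sigma\circ(f\times\mathrm{id})$ as morphisms $\Upsilon\times{\bf Map}(\Sigma,\Y)\to\Y$, which says that evaluating after restricting the map along $f$ is the same as first evaluating on $\Sigma$ and then pulling back the source along $f$. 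Consequently $rest^*\big(\int_{[\Upsilon]}ev_\Upsilon^*\omega\big)$ is computed by integrating $ev_\Sigma^*\omega$ (pulled back to $\Upsilon\times{\bf Map}(\Sigma,\Y)$ via $f\times\mathrm{id}$) against $[\Upsilon]$; by the projection/pushforward compatibility this equals $\int_{f_*[\Upsilon]}ev_\Sigma^*\omega$, the integration of $ev_\Sigma^*\omega$ against the pushed-forward class $f_*[\Upsilon]\in\map\big(\Gamma(\Sigma,\mathcal O_\Sigma),\kk[-d]\big)$.

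**Key steps.** So the argument runs: (1) establish the square identity of evaluation maps above, working $B\mathbb{G}_a$- and $\mathbb{G}_m$-equivariantly so that it descends to closed forms as well as forms; (2) use the functoriality of the transgression construction from \S1.2.4 — the morphism $\Gamma(\Sigma\times\X,\Omega_{\Sigma\times\X})\to\Gamma(\Sigma,\mathcal O_\Sigma)\otimes\Gamma(\X,\Omega_\X)$ is natural in $\Sigma$ — to conclude that $rest^*\int_{[\Upsilon]}ev_\Upsilon^*\omega$ is naturally identified with the image of $\omega$ under the map $\mathcal A^{p(,cl)}(\Sigma\times\X,n)\to\mathcal A^{p(,cl)}(\X,n-d)$ induced by $f_*[\Upsilon]$, evaluated at $\X={\bf Map}(\Sigma,\Y)$ and $ev_\Sigma^*\omega$; (3) now simply feed in the boundary structure. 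The very definition of $\mathbb{B}{\rm nd}(f,[\Upsilon])$ — and the remark immediately following it, which says any boundary structure produces a homotopy between $\int_{f_*[\Upsilon]}$ and the zero map on $\mathcal A^{p(,cl)}(\Sigma\times\X,n)$ — gives a path from $\int_{f_*[\Upsilon]}ev_\Sigma^*\omega$ to $0$. Composing the identification from step (2) with this path yields the desired path from $rest^*\big(\int_{[\Upsilon]}ev_\Upsilon^*\omega\big)$ to $0$ in $\mathcal A^{p(,cl)}\big({\bf Map}(\Sigma,\Y),n-d\big)$.

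**The main obstacle.** The genuinely delicate point is step (2): making the naturality of transgression in the source precise at the level of the $B\mathbb{G}_a$-equivariant (i.e.\ ``closed'') statements, not merely for bare $p$-forms. One has to check that the structural morphism of $\mathcal{O}$-compact stacks behaves well under $f$, that $f_*[\Upsilon]$ is a legitimate replacement for a fundamental class in the defining diagram, and that all the coherence is compatible with the $\mathbb{G}_m$-weights and the $B\mathbb{G}_a$-homotopy-fixed-point functor used to define $\Omega^{cl}$. Once the equivariant naturality of transgression is granted — which is really already contained in the setup of \S1.2.4 and in \cite[Section 2]{PTVV} — the rest is formal: it is just transporting a path along a canonical equivalence. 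A secondary bookkeeping point is to make sure the path one obtains is functorial enough to later be fed into the non-degeneracy analysis needed to upgrade this isotropic datum to a Lagrangian structure (which is the content of the subsequent Theorem \ref{thm-maplag}), but for the present Claim no non-degeneracy is asserted, so this can be deferred.
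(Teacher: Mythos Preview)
Your proposal is correct and follows essentially the same route as the paper: the paper's proof consists precisely of your step~(1)--(2), namely the commutative square $ev_\Upsilon\circ(\mathrm{id}\times rest)=ev_\Sigma\circ(f\times\mathrm{id})$ yielding the identification $rest^*\int_{[\Upsilon]}ev_\Upsilon^*\omega=\int_{f_*[\Upsilon]}ev_\Sigma^*\omega$, after which the path to~$0$ is supplied by the boundary structure exactly as you say in step~(3). Your discussion of the $B\mathbb{G}_a$-equivariance needed for the closed case is a reasonable point of care that the paper leaves implicit in the setup of transgression, but it does not constitute a different argument.
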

\begin{proof}
The claim is a consequence of 
$$
rest^*\int_{[\Upsilon]}ev_\Upsilon^*\omega=\int_{[\Upsilon]}({\rm id}\times rest)^*ev_\Upsilon^*\omega=\int_{f_*[\Upsilon]}ev_\Sigma^*\omega\,,
$$
where the second equality follows from the commutativity of \\[0.1cm]
$
\vcenter{\xymatrix@C=5em{
~~~&~~~&\Upsilon\times{\bf Map}(\Sigma,\Y) \ar[d]^{f\times{\rm id}}\ar[r]^{{\rm id}\times rest} & 
\Upsilon\times{\bf Map}(\Upsilon,\Y) \ar[d]^{ev_\Upsilon}  \\
~~~&~~~&\Sigma\times{\bf Map}(\Sigma,\Y) \ar[r]^{ev_\Sigma} & \Y\,. }
}$
\end{proof}

\medskip

Given a boundary structure on $f:\Upsilon\to\Sigma$ and a perfect $E$ on $\Sigma$, we get a pairing 
$$
\Gamma(\Sigma,E)\otimes\Gamma(\Upsilon,f^*E^\vee)\longrightarrow \Gamma(\Upsilon,f^*E)\otimes\Gamma(\Upsilon,f^*E^\vee)
\overset{\mathrm{duality}}{\longrightarrow}\Gamma(\Upsilon,\mathcal O_\Upsilon)\overset{[\Upsilon]}{\longrightarrow}\kk[-d]\,.
$$
We write $\Gamma(\Sigma,E)^\perp$ for the homotopy fiber of the induced map $\Gamma(\Upsilon,f^*E^\vee)\to \Gamma(\Sigma,E)^\vee[-d]$. 
The path $\gamma$ between $f_*[\Upsilon]$ and $0$ thus induces a map $\Gamma(\Sigma,E^\vee)\to\Gamma(\Sigma,E)^\perp$ which makes the 
following diagram homotopy commutative: 
$$
\vcenter{\xymatrix{
\Gamma(\Sigma,E^\vee) \ar[dr]\ar[r] & \Gamma(\Sigma,E)^\perp \ar[d]\\
& \Gamma(\Upsilon,f^*E^\vee)}}
$$
\begin{Def}
We say that a boundary structure $\gamma$ on $f$ is {\it non-degenerate} if the induced map $\Gamma(\Sigma,E^\vee)\to\Gamma(\Sigma,E)^\perp$ 
is an isomorphism in $hCpx$ for any perfect $E$. 
A {\it relative $d$-orientation} on $f:\Upsilon\to\Sigma$ is a $d$-orientation of $\Upsilon$ together with a non-degenerate boundary structure. 
\end{Def}
We then have the following ``relative'' analog of Theorem \ref{thm-mapping}: 
\begin{Thm}\label{thm-maplag}
Let $\Y$ be a derived Artin stack equipped with an $n$-symplectic structure, $f:\Upsilon\to\Sigma$ be a morphism of $\mathcal O$-compact 
derived stacks and $[\Upsilon]$ be a $d$-orientation on $\Sigma$. If the mapping stacks $\Map(\Sigma,\Y)$ and $\Map(\Upsilon,\Y)$ are derived 
Artin stacks then we have a map 
$$
\mathbb{B}{\rm nd}\left(f,[\Upsilon]\right)\longrightarrow \mathbb{I}{\rm sot}\left(rest,\int_{[\Upsilon]}ev_\Upsilon^*\omega\right)
$$
which sends non-degenerate boundary structures to Lagrangian structures. 
\end{Thm}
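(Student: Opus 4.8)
The plan is to mimic, in the relative setting, the two-part argument that establishes Theorem~\ref{thm-mapping}: first produce the isotropic structure (a path to zero in the space of closed $2$-forms on $\Map(\Sigma,\Y)$), and then check non-degeneracy under the relative orientation hypothesis by a linear-algebra computation on tangent complexes. The existence of the isotropic structure is essentially Claim~\ref{exa-2.5} applied to $p=2$: a boundary structure $\gamma\in\mathbb{B}{\rm nd}(f,[\Upsilon])$ is by definition a path from $f_*[\Upsilon]$ to $0$ in $\map\big(\Gamma(\Sigma,\mathcal O_\Sigma),\kk[-d]\big)$, hence yields a homotopy between $\int_{f_*[\Upsilon]}ev_\Sigma^*(-)$ and the zero map on $\mathcal A^{2,cl}(\Sigma\times\Y,n)$. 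Precomposing with $ev_\Sigma^*\omega$ and using the identity $rest^*\int_{[\Upsilon]}ev_\Upsilon^*\omega=\int_{f_*[\Upsilon]}ev_\Sigma^*\omega$ from the proof of Claim~\ref{exa-2.5}, this homotopy becomes exactly a path from $rest^*\big(\int_{[\Upsilon]}ev_\Upsilon^*\omega\big)$ to $0$ in $\mathcal A^{2,cl}\big(\Map(\Sigma,\Y),n-d\big)$, i.e. an isotropic structure on $rest$ with respect to the transgressed symplectic form on $\Map(\Upsilon,\Y)$. So the map $\mathbb{B}{\rm nd}(f,[\Upsilon])\to\mathbb{I}{\rm sot}(rest,\int_{[\Upsilon]}ev_\Upsilon^*\omega)$ is just this construction, promoted to spaces (it is natural in all the data, so it is really a map of spaces, not merely of sets).

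The substance is the non-degeneracy claim, and the key input is the standard identification of the tangent complex of a mapping stack: $\mathbb{T}_{\Map(\Sigma,\Y)}\simeq \Gamma\big(\Sigma,ev_\Sigma^*\mathbb{T}_\Y\big)$, and similarly for $\Upsilon$, compatibly with $rest$ and with $f$. Writing $E:=ev_\Upsilon^*\mathbb{T}_\Y$ (a perfect complex on $\Upsilon$) and using the $n$-symplectic form on $\Y$ to identify $\mathbb{T}_\Y\simeq\mathbb{L}_\Y[n]$, we should compare the map $\mathbb{T}_{rest}\to\mathbb{L}_{\Map(\Sigma,\Y)}[n-d-1]$ induced by the isotropic structure (in the form of Remark~\ref{rem-2.1}) with the map $\Gamma(\Sigma,(f^*E)^\vee[-d])\to\Gamma(\Sigma,E')^{\vee}$-type map appearing in the definition of a non-degenerate boundary structure, where $E'$ is the relevant perfect complex on $\Sigma$. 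Concretely: the relative tangent complex $\mathbb{T}_{rest}$ is the homotopy fiber of $\Gamma(\Sigma,ev_\Sigma^*\mathbb{T}_\Y)\to\Gamma(\Upsilon,ev_\Upsilon^*\mathbb{T}_\Y)$, i.e. $\Gamma(\Sigma/\Upsilon,\mathbb{T}_\Y)$ in an evident sense; and the condition that $\mathbb{T}_{rest}\to\mathbb{L}_{\Map(\Sigma,\Y)}[n-d-1]$ be an equivalence unwinds, via the non-degeneracy of $\omega$ on $\Y$ and the projection formula for $\int_{[\Upsilon]}$ and $\int_{f_*[\Upsilon]}$, into exactly the statement that $\Gamma(\Sigma,E^\vee)\to\Gamma(\Sigma,E)^\perp$ is an equivalence for $E=ev_\Sigma^*\mathbb{T}_\Y$ — which is what "non-degenerate boundary structure" asserts (for all perfect $E$ on $\Sigma$, in particular this one). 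So the implication "non-degenerate boundary structure $\Rightarrow$ Lagrangian structure" reduces to a diagram chase identifying the two maps.

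I expect the main obstacle to be bookkeeping the $B\mathbb{G}_a$-equivariance and weight data through the relative transgression, and in particular verifying that the homotopy coherences match: one must check that the map $\Gamma(\Sigma,E^\vee)\to\Gamma(\Sigma,E)^\perp$ produced by the boundary structure $\gamma$ is, up to coherent homotopy, the \emph{same} map as the one obtained by transgressing $\gamma$ and reading off the induced morphism on (relative) tangent complexes. This is the relative analogue of the duality/projection-formula compatibility that underlies Theorem~\ref{thm-mapping}, and it should follow formally from the functoriality of $\Gamma(\Sigma\times-,\Omega_{\Sigma\times-})\to\Gamma(\Sigma,\mathcal O_\Sigma)\otimes\Gamma(-,\Omega_{-})$ together with the fiber sequence $\mathcal A^{p,cl}(\Sigma/\Upsilon\times\X,n)\to\mathcal A^{p,cl}(\Sigma\times\X,n)\to\mathcal A^{p,cl}(\Upsilon\times\X,n)$; but making this precise is where the real work lies. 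Everything else — existence of the isotropic structure, the tangent complex computation, the reduction to the perfect complex $ev_\Sigma^*\mathbb{T}_\Y$ — is routine given Theorem~\ref{thm-mapping} and its proof.
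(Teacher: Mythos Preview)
Your proposal is correct and follows the same two-step strategy as the paper: Claim~\ref{exa-2.5} for the isotropic structure, then a tangent-complex identification for non-degeneracy. The one difference worth noting is that you check non-degeneracy via the map $\mathbb{T}_{rest}\to\mathbb{L}_{\Map(\Sigma,\Y)}[n-d-1]$ (the formulation of Remark~\ref{rem-2.1}), whereas the paper works pointwise at an $A$-point $\xi:\Sigma_A\to\Y$ and uses its own definition via the symplectic orthogonal $\mathbb{T}_{{\bf L},x}\to\mathbb{T}_{{\bf L},x}^{rest,\tilde\omega}$. Since $\mathbb{T}_{{\bf L},x}^{rest,\tilde\omega}$ is already a homotopy fiber, it identifies directly (using $\omega$) with $\Gamma(\Sigma_A,\xi^*\mathbb{L}_\Y^\vee)^\perp[n]$, and the comparison map becomes \emph{literally} the map $\Gamma(\Sigma_A,E^\vee)\to\Gamma(\Sigma_A,E)^\perp$ from the definition of non-degenerate boundary structure, with $E=\xi^*\mathbb{L}_\Y^\vee$. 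This choice of formulation dispatches your anticipated ``main obstacle'' about coherence bookkeeping in one line: both sides are homotopy fibers of visibly the same map, so no further diagram chase is required.
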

Here $rest:{\bf L}=\Map(\Sigma,\Y)\to\Map(\Upsilon,\Y)=\X$ is again given by composing with $f$. 
\begin{proof}
We have already seen in Claim \ref{exa-2.5} the existence, for any (closed) $p$-form $\omega$ of degree $n$ on $\Y$, of a map 
$\mathbb{B}{\rm nd}\left(f,[\Upsilon]\right)$ to paths between $rest^*\int_{[\Upsilon]}ev_\Upsilon^*\omega$ and $0$ in 
$\mathcal A^{p(,cl)}({\bf L},n-d)$. 

It remains to show that, whenever $\omega$ is a $2$-form, the above map sends non-degenerate boundary structures to non-degenerate isotropic 
structures. Note that we already know from Theorem \ref{thm-mapping} that $\tilde\omega=\int_{[\Upsilon]}ev_\Upsilon^*\omega$ is non-degenerate. 

For this purpose we describe the map $\mathbb{T}_{{\bf L}}\to \mathbb{T}_{{\bf L}}^{rest,\tilde\omega}$ at a given $A$-point 
$x:Spec(A)\to{\bf L}$  (i.e.~a map $\xi:\Sigma_A=\Sigma\times Spec(A)\to\Y$). First observe that 
$\mathbb{T}_{{\bf L},x}=\Gamma(\Sigma_A,\xi^*\mathbb{T}_\Y)$ and that $\mathbb{T}_{{\bf L},x}^{rest,\tilde\omega}$ is the homotopy fiber of 
the map $rest^*\mathbb{T}_{\X,x}\longrightarrow\mathbb{L}_{{\bf L},x}[n-d]$, which can be decomposed as 
$$
rest^*\mathbb{T}_{\X,x}=\Gamma\big(\Upsilon_A,(f\times{\rm id})^*\xi^*\mathbb{T}_\Y\big)\tilde\longrightarrow
\Gamma\big(\Upsilon_A,(f\times{\rm id})^*\xi^*\mathbb{L}_\Y\big)[n]
\longrightarrow\Gamma(\Sigma_A,\xi^*\mathbb{L}_\Y^\vee)^\vee[n-d]=\mathbb{L}_{{\bf L},x}[n-d]\,,
$$
where the first arrow is induced by $\omega$ and the second one is induced by $[\Upsilon]$. Hence $\mathbb{T}_{{\bf L},x}^{rest,\tilde\omega}$ 
can be identified, using $\omega$, with $\Gamma(\Sigma_A,\xi^*\mathbb{L}_\Y^\vee)^\perp[n]$ and the map 
$\mathbb{T}_{{\bf L},x}\longrightarrow\mathbb{T}_{{\bf L},x}^{rest,\tilde\omega}$ decomposes as 
$$
\Gamma(\Upsilon_A,\xi^*\mathbb{T}_\Y)\cong\Gamma(\Upsilon_A,\xi^*\mathbb{L}_\Y)[n]\longrightarrow
\Gamma(\Sigma_A,\xi^*\mathbb{L}_\Y^\vee)^\perp[n]\cong\mathbb{T}_{{\bf L},x}^{rest,\tilde\omega}\,.
$$
The boundary structure being non-degenerate, it is an isomorphism in $h(A\textrm{-mod})$. 
\end{proof}

\subsection{Symplectic structures on mapping stacks with boundary conditions}

\subsubsection{Mapping stacks with Lagrangian target}

Let $f:{\bf L}\to{\bf Y}$ be a a morphism of derived Artin stack and let $(\Sigma,[\Sigma])$ be a $d$-oriented $\mathcal O$-compact 
derived Artin stack such that $\Map(\Sigma,{\bf L})$ and $\Map(\Sigma,{\bf Y})$ are themselves derived Artin stacks. 
We have the following obvious extension of Theorem \ref{thm-mapping} (from which we borrow the notation): 
\begin{Thm}\label{thm-maplag2}
If $\omega$ is an $n$-symplectic structure on ${\bf Y}$, then $\int_{[\Sigma]}ev^*(-)$ defines a map
$$
\mathbb{L}{\rm agr}(f,\omega)\longrightarrow\mathbb{L}{\rm agr}\left(f\circ-,\int_{[\Sigma]}ev^*(\omega)\right)\,.
$$
\end{Thm}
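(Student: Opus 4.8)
The plan is to run the same argument as in the proof of Theorem \ref{thm-maplag} (and of \cite[Theorem 2.6]{PTVV}): transgress the isotropic datum, then check that non-degeneracy survives, fibrewise over affine schemes. Throughout, write $g\colon\Map(\Sigma,{\bf L})\to\Map(\Sigma,{\bf Y})$ for the morphism given by composition with $f$, let $ev_{\bf L}\colon\Sigma\times\Map(\Sigma,{\bf L})\to{\bf L}$ and $ev_{\bf Y}\colon\Sigma\times\Map(\Sigma,{\bf Y})\to{\bf Y}$ be the evaluation maps, and set $\tilde\omega:=\int_{[\Sigma]}ev_{\bf Y}^*\omega$, which is $(n-d)$-symplectic by Theorem \ref{thm-mapping}.

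First I would produce the underlying map on isotropic structures. Since transgression $\int_{[\Sigma]}ev_{\bf L}^*(-)\colon\mathcal A^{2(,cl)}({\bf L},n)\to\mathcal A^{2(,cl)}\big(\Map(\Sigma,{\bf L}),n-d\big)$ is a morphism of spaces, it carries a path from $f^*\omega$ to $0$ to a path from $\int_{[\Sigma]}ev_{\bf L}^*f^*\omega$ to $0$; and the commutative square relating $ev_{\bf L}$, $ev_{\bf Y}$, $f$ and ${\rm id}_\Sigma\times g$ (the ``transpose'' of the one appearing in Claim \ref{exa-2.5}), together with the naturality of transgression in the mapping-stack variable, identifies $\int_{[\Sigma]}ev_{\bf L}^*f^*\omega$ with $g^*\tilde\omega$. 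This yields a natural map $\mathbb{L}\mathrm{agr}(f,\omega)\to\mathbb{I}\mathrm{sot}(g,\tilde\omega)$, so it only remains to see that a Lagrangian structure on $f$ transgresses to a \emph{non-degenerate} isotropic structure on $g$.

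For non-degeneracy I would work at an arbitrary $A$-point of $\Map(\Sigma,{\bf L})$, that is a map $\xi\colon\Sigma_A\to{\bf L}$ with composite $f\xi\colon\Sigma_A\to{\bf Y}$. Exactly as in the proof of Theorem \ref{thm-maplag} one has $\mathbb{T}_{\Map(\Sigma,{\bf L}),\xi}=\Gamma(\Sigma_A,\xi^*\mathbb{T}_{\bf L})$, and the underlying $2$-form of $\tilde\omega$, combined with the duality coming from the $d$-orientation $[\Sigma]$, identifies the symplectic orthogonal $\mathbb{T}^{g,\tilde\omega}_{\Map(\Sigma,{\bf L}),\xi}$ — the homotopy fibre of $g^*\mathbb{T}_{\Map(\Sigma,{\bf Y}),\xi}\to\mathbb{L}_{\Map(\Sigma,{\bf L}),\xi}[n-d]$ — with $\Gamma\big(\Sigma_A,\xi^*\mathbb{T}^{f,\omega}_{\bf L}\big)$, where $\mathbb{T}^{f,\omega}_{\bf L}$ is the symplectic orthogonal sheaf of $f$ on ${\bf L}$; under this identification the comparison morphism $\mathbb{T}_{\Map(\Sigma,{\bf L}),\xi}\to\mathbb{T}^{g,\tilde\omega}_{\Map(\Sigma,{\bf L}),\xi}$ becomes $\Gamma(\Sigma_A,\xi^*(-))$ applied to the comparison morphism $\mathbb{T}_{\bf L}\to\mathbb{T}^{f,\omega}_{\bf L}$ attached to the chosen isotropic structure on $f$. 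That morphism is an isomorphism in $hQCoh({\bf L})$ precisely because the isotropic structure on $f$ is Lagrangian; hence so is its pull-back by $\xi$, hence so is $\Gamma(\Sigma_A,\xi^*(-))$ of it. Since this holds for every $A$-point, $\mathbb{T}_{\Map(\Sigma,{\bf L})}\to\mathbb{T}^{g,\tilde\omega}_{\Map(\Sigma,{\bf L})}$ is an isomorphism in $hQCoh\big(\Map(\Sigma,{\bf L})\big)$, i.e.\ the transgressed isotropic structure is Lagrangian.

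The hard part is the identification used in the previous paragraph: one must check that, under the duality equivalences supplied by the $d$-orientation (of the form $\Gamma(\Sigma_A,E)^\vee\simeq\Gamma(\Sigma_A,E^\vee)[d]$ for $E$ perfect), the transgression of the underlying $2$-form of $\omega$ really does become $\Gamma(\Sigma_A,\xi^*(-))$ of the underlying $2$-form of $\omega$, so that forming the pertinent homotopy fibre commutes with $\Gamma(\Sigma_A,\xi^*(-))$. This is a diagram chase strictly parallel to the last paragraph of the proof of Theorem \ref{thm-maplag} and introduces no new idea; all the rest (closedness of the transgressed forms, symplecticity of $\tilde\omega$) is already contained in Theorem \ref{thm-mapping}.
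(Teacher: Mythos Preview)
Your proof is correct. The paper does not actually give a proof of this theorem: it is introduced as ``the following obvious extension of Theorem~\ref{thm-mapping}'' and is simply stated. Your argument is exactly the natural one the phrase ``obvious extension'' points to --- transgress the isotropic path via $\int_{[\Sigma]}ev^*(-)$, then verify non-degeneracy fibrewise using the $d$-orientation duality to identify the symplectic orthogonal of the transgressed structure with $\Gamma(\Sigma_A,\xi^*(-))$ of the original symplectic orthogonal --- and it directly parallels both the proof of Theorem~\ref{thm-maplag} in this paper and the proof of \cite[Theorem~2.6]{PTVV}. The only point worth stressing (which you implicitly use) is that $\Gamma(\Sigma_A,\xi^*(-))$ preserves fibre sequences, so the homotopy fibre computation commutes with it; everything else is straightforward.
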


\subsubsection{Mapping stacks with boundary conditions}

Let $f:\Upsilon\to\Sigma$ and $g:{\bf L}\to{\bf Y}$ be a morphisms of derived Artin stacks. We consider the {\it relative derived mapping stack} 
$$
\Map(f,g):=\Map(\Upsilon,{\bf L})\!\overset{h}{\underset{\Map(\Upsilon,{\bf Y})}{\times}}\!\Map(\Sigma,{\bf Y})
$$
and assume that it is a derived Artin stack. 
\begin{Thm}\label{thm-2.13}
If $f$ carries a relative $d$-orientation, ${\bf Y}$ carries an $n$-symplectic structure and $g$ carries a Lagrangian structure, then 
$\Map(f,g)$ has an $(n-d-1)$-symplectic structure. 
\end{Thm}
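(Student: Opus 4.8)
The plan is to realize $\Map(f,g)$ as a homotopy fiber product of two Lagrangian morphisms into a single shifted symplectic stack, and then invoke Theorem \ref{thm-derint}. Concretely, the target of both morphisms should be $\Map(\Upsilon,\Y)$, equipped with the $(n-d)$-symplectic structure $\int_{[\Upsilon]}ev_\Upsilon^*\omega$ coming from the $d$-orientation $[\Upsilon]$ on $\Upsilon$ (Theorem \ref{thm-mapping}). The two Lagrangian morphisms are: on one side, $\Map(\Upsilon,{\bf L})\to\Map(\Upsilon,\Y)$ obtained by post-composing with $g$, which carries a Lagrangian structure by Theorem \ref{thm-maplag2} (transgression of the Lagrangian structure on $g$ against $[\Upsilon]$); on the other side, the restriction map $rest:\Map(\Sigma,\Y)\to\Map(\Upsilon,\Y)$ obtained by pre-composing with $f$, which carries a Lagrangian structure by Theorem \ref{thm-maplag} precisely because the relative $d$-orientation on $f$ supplies a non-degenerate boundary structure. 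Since
$$
\Map(f,g)=\Map(\Upsilon,{\bf L})\overset{h}{\underset{\Map(\Upsilon,\Y)}{\times}}\Map(\Sigma,\Y),
$$
Theorem \ref{thm-derint} then endows it with an $(n-d-1)$-symplectic structure, which is exactly the claim.

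First I would check that the hypotheses of the auxiliary theorems are met: one needs $\Map(\Upsilon,\Y)$ and $\Map(\Upsilon,{\bf L})$ to be derived Artin stacks so that Theorems \ref{thm-mapping} and \ref{thm-maplag2} apply, and $\Map(\Sigma,\Y)$, $\Map(\Upsilon,\Y)$ to be derived Artin for Theorem \ref{thm-maplag}. This is guaranteed in the relevant applications (e.g.\ Betti stacks of compact manifolds with ${\bf Y}$ of the form $BG$) by the representability results of Lurie and To\"en--Vezzosi; I would state it as a standing hypothesis, exactly as is done implicitly when $\Map(f,g)$ is assumed to be a derived Artin stack in the theorem statement. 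Next I would make the identification of the two transgressed structures precise: the Lagrangian structure on $\Map(\Upsilon,{\bf L})\to\Map(\Upsilon,\Y)$ is $\int_{[\Upsilon]}ev_\Upsilon^*(-)$ applied to the Lagrangian structure on $g$, and the $(n-d)$-symplectic form on $\Map(\Upsilon,\Y)$ appearing in both places is literally $\int_{[\Upsilon]}ev_\Upsilon^*\omega$ — so the two inputs to Theorem \ref{thm-derint} are Lagrangian with respect to the same shifted symplectic structure, which is what that theorem requires.

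The one genuine point to verify is the compatibility of base points: Theorem \ref{thm-maplag} produces an isotropic (indeed Lagrangian) structure on $rest$ with respect to $\int_{[\Upsilon]}ev_\Upsilon^*\omega$, and I should confirm that this is the \emph{same} $(n-d)$-symplectic form on $\Map(\Upsilon,\Y)$ as the one transgressed on the ${\bf L}$-side. Unwinding the proof of Theorem \ref{thm-maplag}, the ambient symplectic structure there is indeed $\int_{[\Upsilon]}ev_\Upsilon^*\omega$ (not the zero form), so the matching is immediate; the boundary structure only enters as the chosen path to $0$ after restricting to $\Map(\Sigma,\Y)$. I expect this bookkeeping — tracking which form lives on $\Map(\Upsilon,\Y)$ and checking the two Lagrangian structures refer to it — to be the main (though mild) obstacle; everything else is a formal assembly of Theorems \ref{thm-mapping}, \ref{thm-derint}, \ref{thm-maplag}, and \ref{thm-maplag2}. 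Finally I would remark that, as in \cite{PTVV}, one can equivalently read $\Map(f,g)$ as the stack of maps from $\Sigma$ to $\Y$ together with a lift of the boundary restriction to ${\bf L}$, i.e.\ maps with boundary conditions valued in the Lagrangian $g$ — this is the conceptual content of the theorem and worth stating even if the proof proceeds via the fiber-product formula above.
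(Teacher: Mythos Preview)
Your proposal is correct and follows exactly the same route as the paper: equip $\Map(\Upsilon,\Y)$ with the $(n-d)$-symplectic structure from Theorem~\ref{thm-mapping}, use Theorem~\ref{thm-maplag2} to make $\Map(\Upsilon,{\bf L})\to\Map(\Upsilon,\Y)$ Lagrangian, use Theorem~\ref{thm-maplag} to make $rest:\Map(\Sigma,\Y)\to\Map(\Upsilon,\Y)$ Lagrangian, and conclude with Theorem~\ref{thm-derint}. The additional bookkeeping you flag (Artin hypotheses on the mapping stacks, and that both Lagrangian structures refer to the same transgressed form $\int_{[\Upsilon]}ev_\Upsilon^*\omega$) is handled implicitly in the paper but is indeed the only thing to check.
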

\begin{proof}
First of all, the $d$ orientation on $\Upsilon$ and the $n$-symplectic structure on ${\bf Y}$ produce an $(n-d)$-symplectic structure on 
$\Map(\Upsilon,{\bf Y})$ (by Theorem \ref{thm-mapping}). Then the relative $d$-orientation on $f$ produces a Lagrangian structure on 
$\Map(\Sigma,{\bf Y})$ (by Theorem \ref{thm-maplag}) and the Lagrangian structure on $g$ produces a Lagrangian structure on 
$\Map(\Upsilon,{\bf L})$ (by Theorem \ref{thm-maplag2}). Hence, by Theorem \ref{thm-derint} on derived Lagrangian intersections, 
$\Map(f,g)$ inherits an $(n-d-1)$-symplectic structure. 
\end{proof}


\section{Recovering usual symplectic and Lagrangian moduli stacks}

\subsection{Topological context}

\subsubsection{Boundary structures of Betti type}

Recall that we have a symmetric monoidal $(\infty,1)$-functor $(-)_B:Top\to dSt_{{\bf k}}$. 
For a space $\mathbb{X}$ we have that $\Gamma(\mathbb{X}_B,\mathcal O_{\mathbb{X}_B})=C^*_{sing}(\mathbb{X},\kk)$.
More generally $QCoh(\mathbb{X}_B)$ is equivalent to the $(\infty,1)$-category of locally constant sheaves of $\kk$-modules on $\mathbb{X}$, 
and $\Gamma(\mathbb{X}_B,E)=\Gamma(\mathbb{X},E)$ for any object $E$. 
Note that if $\mathbb{X}$ is compact then $\mathbb{X}_B$ is $\mathcal O$-compact. 

We now let $M$ be a compact oriented topological manifold of dimension $d+1$ with boundary $\partial M$, and consider 
the morphism of derived Artin stacks $f:\Upsilon:=(\partial M)_B\longrightarrow M_B=:\Sigma$ induced by the inclusion of the boundary. 

First of all observe that $\partial M$ being closed and oriented, it comes equipped with a fundamental class 
$[\partial M]\in H_{d}(\partial M,\kk)$ and hence gives a fundamental class $[\Upsilon]:\Gamma(\Upsilon,\mathcal O_\Upsilon)\to \kk[-d]$ 
for $\Upsilon$. Poincar\'e duality guaranties that $[\Upsilon]$ is a $d$-orientation. 

Then the orientation on $M$ provides a relative fundamental class $[M]\in H_{d+1}(M,\partial M,\kk)$ that is sent to $[\partial M]$ 
{\it via} the boundary map $H_{d+1}(M,\partial M,\kk)\to H_{d}(\partial M,\kk)$. Hence\footnote{Because $C_*(M,\partial M,\kk)[-1]$ is the 
homotopy fiber of the map $C_*(\partial M,\kk)\to C_*(M,\kk)$. } it determines a path from $f_*[\Upsilon]$ to $0$ 
in $\map\big(\Gamma(\Sigma,\mathcal O_\Sigma),\kk[-d]\big)$. Relative Poincar\'e duality guaranties that the boundary 
structure we have just defined on $f:\Upsilon\to \Sigma$ is non-degenerate. 

\subsubsection{Mapping stacks with Betti source}

We borrow the notation from the previous paragraph. 
Note that if $\mathbb{X}$ is compact then ${\bf Map}(\mathbb{X}_B,{\bf Y})$ is a derived Artin stack if ${\bf Y}$ is. 
If moreover ${\bf Y}$ carries an $n$-symplectic structure, then Theorem \ref{thm-mapping} tells us that a fundamental class 
$[\partial M]\in H_{d}(\partial M,\kk)$ determines an $(n-d)$-symplectic structure on ${\bf Map}(\Upsilon,{\bf Y})$. 
Additionally, Theorem \ref{thm-maplag} tells us that a fundamental class $[M]\in H_{d+1}(M,\partial M,\kk)$ determines a Lagrangian 
structure on the restriction morphism $rest:{\bf Map}(\Sigma,{\bf Y})\to{\bf Map}(\Upsilon,{\bf Y})$. 

\medskip

In particular, if $G$ is a reductive group and ${\bf Y}=BG$ we get a $(2-d)$-symplectic structure on 
${\bf Map}(\Upsilon,BG)=:{\bf Loc}_G(\partial M)$. Moreover, we have a Lagrangian structure on the restriction morphism 
${\bf Loc}_G(M)\to{\bf Loc}_G(\partial M)$. 
\begin{Exa}
When $d=2$, we get in particular a $0$-shifted symplectic structure on the derived moduli stack ${\bf Loc}_G(\partial M)$ of $G$-local 
systems on a compact oriented surface $\partial M$. Recall (see \cite[\S3.1]{PTVV}) that it induces a genuine symplectic structure on the 
coarse moduli space $Loc_G^s(\partial M)$ of simple $G$-local systems on $\partial M$. 
Moreover, the existence of a Lagrangian structure on the restriction map ${\bf Loc}_G(M)\to{\bf Loc}_G(\partial M)$ tells us in particular 
that the regular locus of the subspace in $Loc_G^s(\partial M)$ consisting of those $G$-local systems extending to $M$ is a Lagrangian 
subvariety. This known fact is the starting point of the construction of the Casson invariant. 
\end{Exa}
\begin{Exa}
If $d=1$ then $\partial M$ is a sum of circles $(S^1)^{\coprod n}$ and thus ${\bf Map}(\Upsilon,BG)=[G/G^{ad}]^{\times n}$ is equipped 
with the $1$-symplectic structure of \S\ref{sss-1.2.5} and we have a Lagrangian structure on the morphism 
${\bf Loc}_G(M)={\bf Map}(\Sigma,BG)\longrightarrow[G/G^{ad}]^{\times n}$. Let now $O_1,\dots,O_n$ be conjugacy classes in $G$; then the morphism 
$[O_1/G^{ad}]\times\cdots\times[O_n/G^{ad}]\longrightarrow[G/G^{ad}]^{\times n}$ comes equipped with a Lagrangian structure (see \S\ref{sss-2.2.2}). 
Hence the derived fiber product 
$$
{\bf Loc}_G(M;O_1,\dots,O_n):={\bf Loc}_G(M)\overset{h}{\underset{[G/G^{ad}]^{\times n}}{\times}}\Big([O_1/G^{ad}]\times\cdots\times[O_n/G^{ad}]\Big)
$$
inherits a $0$-symplectic structure. If $\partial M\neq\emptyset$ then one can show that ${\bf Loc}_G(M)$ is a smooth Deligne-Mumford stack. 
Moreover, for a generic collection $(O_1,\dots,O_n)$ of conjugacy classes, the derived fiber product ${\bf Loc}_G(M;O_1,\dots,O_n)$ is also 
a smooth Deligne-Mumford stack. We recover in this way the symplectic structure on the moduli space of local systems on a surface with 
prescribed holonomy along the boundary components (see \cite[Section 9]{AMM} and references therein). 
\end{Exa}

\subsection{Algebro-geometric context}

\subsubsection{Boundary structures of algebro-geometric type}

Let $\Sigma$ be a geometrically connected smooth proper algebraic variety of dimension $d+1$ together with a smooth $d$-Calabi-Yau divisor 
$\Upsilon$ having anticanonical class. We have a fundamental class 
$$
[\Upsilon]:\Gamma(\Upsilon,\mathcal O_\Upsilon)\longrightarrow H^d(\Upsilon,\mathcal O_\Upsilon)[-d]\tilde\longrightarrow 
H^d(\Upsilon,\mathcal K_\Upsilon)[-d]\cong\kk[-d]\,,
$$
where $\mathcal K_\Upsilon:=\wedge^d(L_\Upsilon)$ is the canonical sheaf. This is actually a $d$-orientation by Serre duality. 

Then using that $\mathcal K_\Sigma:=\wedge^{d+1}(L_\Sigma)$ is isomorphic to $\mathcal O_\Sigma(-\Upsilon)$ we get a relative 
fundamental class 
$$
[\Sigma]:\Gamma\big(\Sigma,\mathcal O_\Sigma(-\Upsilon)\big)\longrightarrow H^{d+1}\big(\Sigma,\mathcal O_\Sigma(-\Upsilon)\big)[-d-1]
\tilde\longrightarrow H^{d+1}(\Sigma,\mathcal K_\Sigma)[-d-1]\cong\kk[-d-1]\,.
$$
which lifts $[\Upsilon]$. Namely, if we denote by $f$ the inclusion of $\Upsilon$ into $\Sigma$ then there is a short exact sequence 
\begin{equation}\label{eq:ses}
0\longrightarrow \mathcal O_\Sigma(-\Upsilon)\longrightarrow \mathcal O_\Sigma\longrightarrow f_*\mathcal O_\Upsilon\longrightarrow 0
\end{equation}
such that the induced map $b:\Gamma(\Upsilon,\mathcal O_\Upsilon)\longrightarrow\Gamma\big(\Sigma,O_\Sigma(-\Upsilon)\big)[1]$ makes the 
following diagram commute: 
$$
\xymatrix{
\Gamma(\Upsilon,\mathcal O_\Upsilon) \ar[r]^b\ar[d]_{[\Upsilon]} & \Gamma\big(\Sigma,O_\Sigma(-\Upsilon)\big)[1] \ar[d]^{[\Sigma]} \\
H^d(\Upsilon,\mathcal K_\Upsilon)[-d]\cong \kk[-d] \ar@{=}[r] & \kk[-d]\cong H^{d+1}(\Sigma,\mathcal K_\Sigma)[-d]\,.
}
$$
Exactness of \eqref{eq:ses} tells us in particular that $\Gamma(\Sigma,\mathcal O_\Sigma)$ is the homotopy fiber of $b$. 
Therefore $[\Sigma]$ provides a homotopy between $f_*[\Upsilon]$ and $0$ in $\map\big(\Gamma(\Sigma,\mathcal O_\Sigma),\kk[-d]\big)$, 
that is to say a boundary structure on  $f$. 
\begin{Claim}
This boundary structure is non-degenerate. 
\end{Claim}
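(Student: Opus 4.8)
The plan is to prove non-degeneracy by showing directly that, for every perfect complex $E$ on $\Sigma$, the comparison map $\Gamma(\Sigma,E^\vee)\to\Gamma(\Sigma,E)^\perp$ built from the boundary structure is an equivalence in $hCpx$, by exhibiting both sides as homotopy fibres of one and the same map. First I would use the projection formula to identify $\Gamma(\Upsilon,f^*E^\vee)\cong\Gamma\big(\Sigma,E^\vee\otimes f_*\mathcal{O}_\Upsilon\big)$, so that tensoring the short exact sequence \eqref{eq:ses} with the perfect complex $E^\vee$ and applying $\Gamma(\Sigma,-)$ produces a fibre sequence
$$
\Gamma\big(\Sigma,E^\vee(-\Upsilon)\big)\longrightarrow\Gamma\big(\Sigma,E^\vee\big)\overset{f^*}{\longrightarrow}\Gamma\big(\Upsilon,f^*E^\vee\big)\overset{\partial}{\longrightarrow}\Gamma\big(\Sigma,E^\vee(-\Upsilon)\big)[1]\,,
$$
whose connecting map $\partial$ is induced by the boundary map $f_*\mathcal{O}_\Upsilon\to\mathcal{O}_\Sigma(-\Upsilon)[1]$ of \eqref{eq:ses}. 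In particular $\Gamma(\Sigma,E^\vee)$ is canonically the homotopy fibre of $\partial$, and the map to $\Gamma(\Upsilon,f^*E^\vee)$ in this sequence is the restriction $f^*$.

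Next I would invoke Serre duality on the $(d+1)$-dimensional smooth proper $\Sigma$, giving a natural equivalence $\Gamma(\Sigma,E)^\vee\cong\Gamma\big(\Sigma,E^\vee\otimes\mathcal{K}_\Sigma\big)[d+1]$; using the hypothesis $\mathcal{K}_\Sigma\cong\mathcal{O}_\Sigma(-\Upsilon)$ this becomes $\Gamma\big(\Sigma,E^\vee(-\Upsilon)\big)[1]\cong\Gamma(\Sigma,E)^\vee[-d]$. The heart of the proof is to check that, under this identification, $\partial$ coincides with the map $\Gamma(\Upsilon,f^*E^\vee)\to\Gamma(\Sigma,E)^\vee[-d]$ whose homotopy fibre defines $\Gamma(\Sigma,E)^\perp$, that is, the adjoint of the pairing obtained by restricting $E$ to $\Upsilon$, applying the evaluation $f^*E\otimes f^*E^\vee\to\mathcal{O}_\Upsilon$, and then $[\Upsilon]$. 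Granting this, the fibre sequence above becomes precisely the defining fibre sequence of $\Gamma(\Sigma,E)^\perp$, so $\Gamma(\Sigma,E^\vee)\simeq\Gamma(\Sigma,E)^\perp$; it then remains to match the two lifts of $f^*:\Gamma(\Sigma,E^\vee)\to\Gamma(\Upsilon,f^*E^\vee)$, namely the tautological one coming from $\Gamma(\Sigma,E^\vee)=\mathrm{fib}(\partial)$ and the one induced by the boundary structure $\gamma$. These agree because $\gamma$ is by construction the null-homotopy of $f_*[\Upsilon]$ furnished by $[\Sigma]$ through the fact that $\Gamma(\Sigma,\mathcal{O}_\Sigma)$ is the homotopy fibre of $b$ --- i.e.\ $\gamma$ is produced out of the very same sequence \eqref{eq:ses}. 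Hence the comparison map is an equivalence for all perfect $E$, and the boundary structure is non-degenerate.

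I expect the main obstacle to be the compatibility asserted in the middle step: that the connecting homomorphism $\partial$ of the residue sequence \eqref{eq:ses}, after Serre duality on $\Sigma$, computes the pairing defined through $[\Upsilon]$ and Serre duality on $\Upsilon$ (here $\mathcal{K}_\Upsilon\cong\mathcal{O}_\Upsilon$). This is the ``with coefficients in $E$ and with pairings'' upgrade of the commutative square relating $b$, $[\Upsilon]$ and $[\Sigma]$ displayed just above the Claim, and it is exactly the classical fact that the Poincar\'e residue --- equivalently, the boundary map of the adjunction sequence $0\to\mathcal{K}_\Sigma\to\mathcal{K}_\Sigma(\Upsilon)\to f_*\mathcal{K}_\Upsilon\to0$ --- is compatible with the trace maps of Grothendieck--Serre duality. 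I would establish it either by citing the functoriality of Grothendieck duality for the closed immersion $f$, where $f^!\mathcal{O}_\Sigma\simeq\mathcal{K}_\Upsilon\otimes f^*\mathcal{K}_\Sigma^{-1}[-1]\simeq\mathcal{O}_\Upsilon(\Upsilon)[-1]$ and the residue is the trace $\mathrm{Tr}_f:f_*f^!\mathcal{O}_\Sigma\to\mathcal{O}_\Sigma$, or by a local computation reducing to $\Sigma$ affine with $\Upsilon$ cut out by a single equation, where it is the usual residue formula. Everything else --- projection formula, Serre duality, rotating fibre sequences, comparing the two lifts --- is formal.
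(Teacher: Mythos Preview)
Your argument is correct and is essentially the paper's own proof, only organized from the other end: the paper first identifies $\Gamma(\Sigma,E)^\perp$ with $\Gamma(\Sigma,E\otimes\mathcal K_\Sigma)^\vee[-d-1]$ (via Serre duality on $\Upsilon$ and the dual of the sequence \eqref{eq:ses}) and then observes that the comparison map $\Gamma(\Sigma,E^\vee)\to\Gamma(\Sigma,E\otimes\mathcal K_\Sigma)^\vee[-d-1]$ is precisely the Serre pairing on $\Sigma$, whereas you identify $\Gamma(\Sigma,E^\vee)$ as the fibre of $\partial$ and then use Serre duality on $\Sigma$ to match $\partial$ with the defining map of $\Gamma(\Sigma,E)^\perp$. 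The substantive step---the residue/trace compatibility you single out as the ``main obstacle''---is exactly what the paper packages into the phrase ``one can then easily check'', so your more explicit discussion of it via Grothendieck duality for $f$ is a welcome elaboration rather than a different route.
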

\begin{proof}
Let $E$ be a perfect complex on $\Sigma$. We shall prove that the map $\Gamma(\Sigma,E^\vee)\to \Gamma(\Sigma,E)^\perp$ 
is an isomorphism in $hCpx$. Recall that $\Gamma(\Sigma,E)^\perp$ is the homotopy fiber of 
$$
\Gamma(\Upsilon,f^*E^\vee)\cong\Gamma(\Upsilon,f^*E)^\vee[-d]\longrightarrow\Gamma(\Sigma,E)^\vee[-d]\,,
$$
which is nothing but $\Gamma(\Sigma,E\otimes\mathcal K_\Sigma)^\vee[-d-1]$ (again by exactness of \eqref{eq:ses}). 

One can then easily check that the map $\Gamma(\Sigma,E^\vee)\to \Gamma(\Sigma,E\otimes\mathcal K_\Sigma)^\vee[-d-1]$ provided 
by the boundary structure is adjoint to the following pairing: 
$$
\Gamma(\Sigma,E^\vee)\otimes\Gamma(\Sigma,E\otimes\mathcal K_\Sigma)\overset{\mathrm{duality}}{\longrightarrow}\Gamma(\Sigma,\mathcal K_\Sigma)
\overset{[\Sigma]}{\longrightarrow}\kk[-d-1]\,.
$$
Hence the map $\Gamma(\Sigma,E^\vee)\to \Gamma(\Sigma,E)^\perp$ is an isomorphism in $hCpx$ by Serre duality. 
\end{proof}
We refer to \cite{To-survey} for a more general statement dealing with the case of a non-smooth divisor. 

\subsubsection{Mapping stack with Fano and Calabi-Yau sources}

First observe that any geometrically connected smooth proper algebraic variety $\Sigma$ (over $\kk$), considered as a derived stack, 
is $\mathcal O$-compact. Then ${\bf Map}(\Sigma,{\bf Y})$ is a derived Artin stack if ${\bf Y}$ is. Let us further assume that 
${\bf Y}$ carries an $n$-symplectic form. 

Using Theorems \ref{thm-mapping} and \ref{thm-maplag}, and borrowing the notation from the previous paragraph, we obtain an $(n-d)$-symplectic 
structure on ${\bf Map}(\Upsilon,{\bf Y})$ together with a Lagrangian structure on the restriction morphism
$rest:{\bf Map}(\Sigma,{\bf Y})\to {\bf Map}(\Upsilon,{\bf Y})$. 

\medskip

In particular, if $G$ is a reductive group and ${\bf Y}=BG$  we get a $(2-d)$-symplectic structure on 
${\bf Map}(\Upsilon,{\bf Y})=:{\bf Bun}_G(\Upsilon)$ and a Lagrangian structure on the restriction morphism 
${\bf Bun}_G(\Sigma)\to{\bf Bun}_G(\Upsilon)$. Similarly, if ${\bf Y}={\bf Perf}$ then we get $(2-d)$-symplectic structure on 
${\bf Perf}(\Upsilon)$ and a Lagrangian structure on the restriction morphism ${\bf Perf}(\Sigma)\to{\bf Perf}(\Upsilon)$. 

\begin{Exa}
If $d=1$ then $\Sigma$ is a del Pezzo surface and $\Upsilon$ is an elliptic curve. 
Hence ${\bf Bun}_G(\Upsilon)$ is $1$-symplectic. Consider a semi-stable $G$-bundle $E$, which determines a point in ${\bf Bun}_G(\Upsilon)$. 
Formally around that point ${\bf Bun}_G(\Upsilon)$ is isomorphic to the neighbourhood of the unit in $H/H^{ad}$, for some reductive 
subgroup $H\subset G$. Hence $E$ provides us with a Lagrangian morphism $BH\longrightarrow{\bf Bun}_G(\Upsilon)$. 
Note that the restriction morphism ${\bf Bun}_G(\Sigma)\to{\bf Bun}_G(\Upsilon)$ also has a Lagrangian structure. Therefore 
$$
{\bf Bun}_G(\Sigma;\Upsilon,E):={\bf Bun}_G(\Sigma)\overset{h}{\underset{{\bf Bun}_G(\Upsilon)}{\times}}BH\,,
$$
which is nothing but the derived moduli stack of $(\Upsilon,E)$-framed $G$-bundles on $\Sigma$, is $0$-symplectic. 
When $BG$ is replaced by ${\bf Perf}$ the genuine symplectic structure that we get on the smooth locus should coincide with the ones 
defined in \cite{Bot,Sala}. 
\end{Exa}
\begin{Exa}
If $d=2$ then $\Sigma$ is a Fano $3$-fold and $\Upsilon$ is a $K3$ surface. In \cite{PTVV} it is proven that the $0$-symplectic 
structure on ${\bf Map}(\Upsilon,{\bf Perf})={\bf Perf}(\Upsilon)$ induces an actual symplectic structure on the coarse moduli space 
$Perf^s(\Upsilon)$ of simple perfect complexes on $\Upsilon$. The existence of a Lagrangian structure on 
${\bf Perf}(\Sigma)\to{\bf Perf}(\Upsilon)$ tells us that the closed subspace of $Perf^s(\Upsilon)$ consisting of restrictions of perfect 
complexes on $\Sigma$ is Lagrangian (compare with \cite[Proposition 2.1]{T}). 
\end{Exa}


\section{Application: topological field theories from mapping stacks}

Let $M$ be a closed oriented topological $3$-manifold together with an emmbedded closed oriented surface $S$ that separates it 
into two part $M_+$ and $M_-$: $\partial M_+=S=\partial \overline{M_-}$. 

Let $G$ be a reductive group and observe that we have a weak equivalence of derived Artin stacks
$$
{\bf Loc}_G(M)\,\tilde\longrightarrow\,{\bf Loc}_G(M_+)\overset{h}{\underset{{\bf Loc}_G(S)}{\times}}{\bf Loc}_G(M_-)\,.
$$
One can actually prove that it preserves the $(-1)$-symplectic structures on both sides, where the one on the 
r.h.s.~is coming from the derived fiber product of Lagrangian morphisms. 

\medskip

We view the above as an instance of a more general fact that we prove in this Section: 
\begin{quote}
{\it The functor ${\bf Map}\big((-)_B,{\bf Y}\big)$ defines a topological field theory with values in a suitable category of Lagrangian 
correspondences whenever ${\bf Y}$ admits an $n$-symplectic structure. }
\end{quote}

\subsection{Classical TFTs from mapping stacks}\label{ss-TFTmap}

\subsubsection{The cobordism category}

Let $d\geq0$ be an integer. 
We define $dCob$, resp.~$dCob^{or}$, to be the category with objects being closed differentiable manifolds, resp.~oriented differentiable manifolds, 
and morphisms being diffeomorphism classes of cobordisms, resp.~oriented cobordisms. It is a symmetric monoidal category, with 
monoidal product the disjoint union. 

\medskip

A {\it $d$-dimensional TFT} with values in a symmetric monoidal category $\mathcal C$ is a symmetric monoidal functor $dCob\to \mathcal C$. 

\subsubsection{A category of cospans}\label{sss-cospans}

We let $Cosp$ be the category with object being compact spaces and ${\rm Hom}_{Cosp}(X,Y)$ being weak equivalence classes 
of cospans $X\rightarrow F\leftarrow Y$. 
Composition of morphisms is given by the homotopy push-out: for $X\rightarrow F\leftarrow Y$ and $Y\rightarrow G\leftarrow Z$, we define 
$X\rightarrow G\circ F\leftarrow Z$ to be $X\rightarrow F\overset{h}{\underset{Y}{\coprod}}G\leftarrow Z$. It is symmetric monoidal, 
with monoidal product the disjoint union (the categorical sum). 

\medskip

For every $d\geq0$ there is a symmetric monoidal functor $\mathcal F:dCob\to Cosp$ which sends a differentiable manifold to its 
underlying topological space and a cobordism to the corresponding cospan of spaces. The functoriality of the assignement follows 
from the fact that inclusions of boundary components in a differentiable manifold are cofibrations (which itself follows from the 
existence of collars), and guaranties that the ordinary push-out is a homotopy push-out. 

\subsubsection{A category of correspondences}

Let $Corr$ be the category with objects being derived Artin stacks and ${\rm Hom}_{Corr}({\bf X},{\bf Y})$ being weak equivalence classes 
of correspondences ${\bf X}\leftarrow {\bf F}\rightarrow {\bf Y}$. Composition of morphisms is given by the homotopy fiber product: 
for ${\bf X}\leftarrow {\bf F}\rightarrow {\bf Y}$ and ${\bf Y}\leftarrow {\bf G}\rightarrow {\bf Z}$, we define 
${\bf X}\leftarrow {\bf G}\circ {\bf F}\rightarrow {\bf Z}$ to be 
${\bf X}\leftarrow {\bf F}\overset{h}{\underset{Y}{\times}}{\bf G}\rightarrow {\bf Z}$. It is symmetric monoidal, 
with monoidal product the Cartesian product. 

\medskip

For any derived Artin stack ${\bf Y}$ we have a symmetric monoidal functor ${\bf Map}\big((-)_B,{\bf Y}\big):Cosp\to Corr$. 
Composed with $\mathcal F$ this gives, for every $d\geq0$, a $d$-dimensional TFT $\mathcal Z_{\bf Y}$ with values in $Corr$. 

\subsection{Semi-classical TFTs from mapping stacks with $n$-symplectic target}\label{ss-TFTlag}

\subsubsection{A category of $d$-oriented cospans}

Let us notice that a compact topological space $\mathbb{X}$ is an oriented Poincar\'e duality space of dimension $d$ (we say 
{\it $d$-oriented}) if and only if the corresponding $\mathcal O$-compact derived Artin stack $\mathbb{X}_B$ is $d$-oriented. 
Hence, if $\mathbb{X}$ is a $d$-oriented compact space then a {\it relative $d$-orientation} on a 
map $f:\mathbb{X}\to \mathbb{X}'$ is defined as a non-degenerate boundary structure on $f_B:\mathbb{X}_B\to \mathbb{X}'_B$. 
\begin{Rem}
Let us give an alternative (and probably easier) description of relative $d$-orientations. We have a fundamental class 
$[\mathbb{X}]:C^*_{sing}(\mathbb{X},\kk)\to\kk[-d]$ and a path from $f_*[\mathbb{X}]$ to $0$ in 
$\map\left(C^*_{sing}(\mathbb{X},\kk),\kk[-d]\right)$. Such a path provides, for any perfect local system $E$ on $\mathbb{X}'$, 
a homotopy commutativity data for the following diagram: 
\begin{equation}\label{eq-reldor}
\vcenter{\xymatrix{
C^*_{sing}(\mathbb{X}',E) \ar[r]\ar[rrd]_0 & C^*_{sing}(\mathbb{X},f^*E)\ar[r]^{\cap[\mathbb{X}]~~~} 
& C_*^{sing}(\mathbb{X},f^*E^\vee)[-d] \ar[d]\\
 && C_*^{sing}(\mathbb{X}',E^\vee)[-d] }}
\end{equation}
Hence the horizontal composition lifts to a map $C^*_{sing}(\mathbb{X}',E)\longrightarrow C^{sing}_*(f,E^\vee)[-d-1]$ denoted $\cap[\mathbb{X}']$, 
where $C_*^{sing}(f,E^\vee)$ is the homotopy cofiber of $C_*^{sing}(\mathbb{X},f^*E^\vee)\to C_*^{sing}(\mathbb{X}',E^\vee)$. 
The non-degeneracy condition can then be restated as follows: $\cap[\mathbb{X}']$ is an isomorphism in $hCpx$. 
\end{Rem}
\begin{Exa}
Oriented Poincar\'e $d$-pairs provide examples of relative $d$-orientations. 
\end{Exa}
We now prove a result that will allow us to define composition of cospans between $d$-oriented spaces. 
\begin{Thm}\label{thm-cospans}
Let $\mathbb{X},\mathbb{Y},\mathbb{Z}$ be three $d$-oriented spaces and assume we have relative $d$-orientations on the maps 
$f:\mathbb{X}\coprod \overline{\mathbb{Y}} \to \mathbb{F}$ and $g:\mathbb{Y}\coprod \overline{\mathbb{Z}} \to \mathbb{G}$, 
where $\overline{\mathbb{Y}}$ means that we consider $\mathbb{Y}$ equipped with the opposite fundamental class. 
Then we have a relative $d$-orientation on 
$t:\mathbb{X}\coprod\overline{\mathbb{Z}}\to\mathbb{T}:=\mathbb{F}\overset{h}{\underset{\mathbb{Y}}{\coprod}}\mathbb{G}$. 
\end{Thm}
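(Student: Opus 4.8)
The plan is to mimic, on the topological side, the chain of reasoning that establishes Theorem \ref{thm-2.13}, but now working directly with singular (co)chains rather than with mapping stacks; in fact, one could also deduce it from Theorem \ref{thm-2.13} by applying $(-)_B$, using that $(-)_B$ sends homotopy push-outs of spaces to homotopy push-outs of stacks and translates $d$-orientations into $d$-orientations (and relative $d$-orientations into non-degenerate boundary structures), but I will sketch the self-contained argument. First I would fix notation: write $\partial\mathbb{T}:=\mathbb{X}\coprod\overline{\mathbb{Z}}$, and recall that we must produce (i) a fundamental class $[\mathbb{T}]:C^*_{sing}(\mathbb{T},\kk)\to\kk[-d]$ restricting to the given fundamental class on $\partial\mathbb{T}$ — i.e.\ a path from $t_*[\partial\mathbb{T}]$ to $0$ — and (ii) a proof that this path is non-degenerate in the sense of the Definition following Claim \ref{exa-2.5}. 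The key geometric input is the Mayer--Vietoris square: since $\mathbb{T}=\mathbb{F}\overset{h}{\underset{\mathbb{Y}}{\coprod}}\mathbb{G}$, cochains on $\mathbb{T}$ with coefficients in a perfect local system $E$ sit in a fiber sequence
$$
C^*_{sing}(\mathbb{T},E)\longrightarrow C^*_{sing}(\mathbb{F},E)\oplus C^*_{sing}(\mathbb{G},E)\longrightarrow C^*_{sing}(\mathbb{Y},E)\,,
$$
and dually for chains; this is what will let us glue the two relative fundamental classes.

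The construction of $[\mathbb{T}]$ goes as follows. A relative $d$-orientation on $f:\mathbb{X}\coprod\overline{\mathbb{Y}}\to\mathbb{F}$ is, by the Remark before the Example on Poincar\'e pairs, a nullhomotopy of the composite $C^*_{sing}(\mathbb{F},\kk)\to C^*_{sing}(\mathbb{X}\coprod\overline{\mathbb{Y}},\kk)\xrightarrow{[\mathbb{X}]-[\mathbb{Y}]}\kk[-d]$; equivalently a lift $[\mathbb{F}]:C^*_{sing}(\mathbb{F},\kk)\to C_*^{sing}(\mathbb{F},\mathbb{X}\coprod\overline{\mathbb{Y}})[-d]$ through the relative chains, with the prescribed boundary. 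Similarly for $g$ we have $[\mathbb{G}]$. Along the common piece $\mathbb{Y}$, the $\mathbb{F}$-side contributes $[\mathbb{Y}]$ and the $\mathbb{G}$-side contributes $\overline{[\mathbb{Y}]}=-[\mathbb{Y}]$, so the two boundary terms cancel. Concretely I would choose explicit representatives: pick relative fundamental cycles for $(\mathbb{F},\partial\mathbb{F})$ and $(\mathbb{G},\partial\mathbb{G})$ whose $\mathbb{Y}$-boundary components are literally the same chosen fundamental cycle of $\mathbb{Y}$ up to sign; then the image of $([\mathbb{F}],[\mathbb{G}])$ under the Mayer--Vietoris map to $C_*^{sing}(\mathbb{Y})[-d]$ is nullhomotopic, hence lifts to a class $[\mathbb{T}]\in H_{\mathrm{lf}}$ of the appropriate relative chain complex of $(\mathbb{T},\partial\mathbb{T})$ — this uses precisely the fiber sequence above together with its dual. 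That $[\mathbb{T}]$ restricts to $[\mathbb{X}]-[\mathbb{Z}]$ on $\partial\mathbb{T}$ is immediate from the construction since the $\mathbb{X}$- and $\mathbb{Z}$-pieces are untouched by the gluing. This yields the required boundary structure on $t$.

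It remains to check non-degeneracy: for every perfect local system $E$ on $\mathbb{T}$, the induced map
$$
\cap[\mathbb{T}]:C^*_{sing}(\mathbb{T},E)\longrightarrow C_*^{sing}(t,E^\vee)[-d-1]
$$
is a quasi-isomorphism, where $C_*^{sing}(t,E^\vee)$ is the cofiber of $C_*^{sing}(\mathbb{X}\coprod\overline{\mathbb{Z}},t^*E^\vee)\to C_*^{sing}(\mathbb{T},E^\vee)$. Here is where the Mayer--Vietoris square does the work: restricting $E$ to $\mathbb{F}$, $\mathbb{G}$, $\mathbb{Y}$, the target $C_*^{sing}(t,E^\vee)$ fits into a fiber sequence built from the corresponding relative cofibers $C_*^{sing}(f,E^\vee)$, $C_*^{sing}(g,E^\vee)$ and a term for $\mathbb{Y}$; capping with $[\mathbb{T}]$ is compatible, by the gluing of fundamental cycles, with capping with $[\mathbb{F}]$, $[\mathbb{G}]$ and with the $d$-orientation $[\mathbb{Y}]$ on $\mathbb{Y}$ (which is an equivalence because $\mathbb{Y}$ is $d$-oriented). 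The relative cap products $\cap[\mathbb{F}]$ and $\cap[\mathbb{G}]$ are equivalences by hypothesis (relative $d$-orientation of $f$ and $g$), and $\cap[\mathbb{Y}]$ is an equivalence since $\mathbb{Y}$ is $d$-oriented. A map of fiber sequences that is an equivalence on two of the three terms is an equivalence on the third; applying this (after a short diagram chase keeping track of shifts, so that the Poincar\'e--Lefschetz duality degrees $[-d]$ versus $[-d-1]$ match up) shows $\cap[\mathbb{T}]$ is an equivalence. This is precisely the non-degeneracy condition, so $(t,[\mathbb{T}])$ is a relative $d$-orientation.

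The main obstacle, I expect, is purely organizational rather than conceptual: making the gluing of fundamental cycles along $\mathbb{Y}$ genuinely coherent — i.e.\ producing the path in $\map\big(C^*_{sing}(\mathbb{T},\kk),\kk[-d]\big)$ together with all the higher homotopies needed to run the Mayer--Vietoris comparison of fiber sequences — and correctly bookkeeping the various degree shifts coming from ``absolute vs.\ relative'' Poincar\'e duality on $\mathbb{F}$, $\mathbb{G}$, $\mathbb{Y}$ and $\mathbb{T}$, so that the cofiber $C_*^{sing}(t,E^\vee)$ is identified with the right shift of $C^*_{sing}(\mathbb{T},E)^\vee$. Once the identification of the three compatible fiber sequences is set up cleanly, non-degeneracy drops out formally from the two-out-of-three property for equivalences.
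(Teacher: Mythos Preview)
Your proposal is correct and follows essentially the same route as the paper: construct the boundary structure on $t$ by concatenating the paths coming from the two relative orientations with the tautological identification over $\mathbb{Y}$, and then verify non-degeneracy by a Mayer--Vietoris map of fiber sequences together with the two-out-of-three property for equivalences. The only difference is packaging: the paper phrases step one directly as a composition of three paths $i_*f_*[\mathbb{X}]\leadsto i_*f_*[\mathbb{Y}]\leadsto j_*g_*[\mathbb{Y}]\leadsto j_*g_*[\mathbb{Z}]$ rather than as a gluing of relative fundamental cycles, and your aside invoking Theorem~\ref{thm-2.13} is misplaced (that theorem concerns symplectic structures on relative mapping stacks, not composition of relative orientations).
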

\begin{proof}
Let us write $i$, resp.~$j$, for the map $\mathbb{F}\to\mathbb{T}$, resp.~$\mathbb{G}\to\mathbb{T}$. 
We first observe that the relative $d$-orientation on $f$ produces a path from $f_*[\mathbb{X}]$ to $f_*[\mathbb{Y}]$, 
and thus from $i_*f_*[\mathbb{X}]$ to $i_*f_*[\mathbb{Y}]$. Similarly we have a path from $j_*g_*[\mathbb{Y}]$ to $j_*g_*[\mathbb{Z}]$. 
Finally, there is (by definition of the push-out) a path from $i_*f_*[\mathbb{Y}]$ to $j_*g_*[\mathbb{Y}]$. 
Composing these three paths we get a path from $i_*f_*[\mathbb{X}]$ to $j_*g_*[\mathbb{Z}]$, which defines a boundary structure 
on the morphism $t_B$. 

We then have to prove that it is non-degenerate. One way of doing that is by contemplating the following homotopy commuting diagram, 
in which all rows are exact (and $E$ is a perfect local system): 
$$
\vcenter{\xymatrix{
C^*_{sing}(\mathbb{T},E)\ar[r]\ar[d]^{\cap[\mathbb{T}]} & 
C^*_{sing}(\mathbb{F},i^*E)\oplus C^*_{sing}(\mathbb{G},j^*E)\ar[r]\ar[d]^{\cap([\mathbb{F}]+[\mathbb{G}])} & 
C^*_{sing}(\mathbb{Y},f^*i^*E)\ar[d]^{\cap[\mathbb{Y}]}\ar[r]^{~~~~~~~+1} &  \\
C_*^{sing}(t,E^\vee)[-d-1]\ar[r] & \left(C_*^{sing}(f,i^*E^\vee)\oplus C_*^{sing}(g,j^*E^\vee)\right)[-d-1] \ar[r]&
C^*_{sing}(\mathbb{Y},f^*i^*E^\vee)[-d]\ar[r]^{~~~~~~~~~~~~~+1} & 
}}
$$
The first vertical arrow is an isomorphism in $hCpx$ because the last two ones are. 
\end{proof}

Let $Cosp^{d\textrm{-}or}$ be the category with objects being $d$-oriented spaces and 
${\rm Hom}_{Cosp^{d\textrm{-}or}}(\mathbb{X},\mathbb{Y})$ being weak equivalence classes of $d$-oriented maps 
$\mathbb{X}\coprod\overline{\mathbb{Y}}\to\mathbb{F}$, which we call $d$-oriented cospans. 
The composition of two morphisms is given by their homotopy push-out, as in \S\ref{sss-cospans}, with $d$-orientation given by 
Theorem \ref{thm-cospans}. It is again symmetric monoidal, with monoidal product $\coprod$. 

For every $d\geq0$ there is a symmetric monoidal functor $\mathcal F^{or}:dCob^{or}\to Cosp^{d\textrm{-}or}$ which sends an oriented 
$d$-dimensional differentiable manifold to its underlying $d$-oriented space and an oriented cobordism of dimension $d+1$ to the 
corresponding $d$-oriented cospan. It fits into a commuting diagram 
$$
\xymatrix{
dCob^{or}\ar[d]\ar[r]^{\mathcal F^{or}} & Cosp^{d\textrm{-}or}\ar[d] \\
dCob\ar[r]^{\mathcal F} & Cosp
}
$$
where the vertical arrows consist in forgetting orientations. 

\subsubsection{A category of Lagrangian correspondences}\label{sss-lagcorr}

We start with an analog of Theorem \ref{thm-cospans} for Lagrangian structures, which generalizes Theorem \ref{thm-derint} 
on derived Lagrangian intersections. 
\begin{Thm}\label{thm-lagcorr}
Let ${\bf L}_1\overset{f_1}{\longrightarrow}{\bf X}\times {\bf Y}$ and ${\bf L}_2\overset{f_2}{\longrightarrow}{\bf Y}\times{\bf Z}$ be
morphisms of derived Artin stacks, and assume that ${\bf X}$, ${\bf Y}$ and ${\bf Z}$ are equipped with $n$-symplectic structures 
$\omega_{\bf X}$, $\omega_{\bf Y}$ and $\omega_{\bf Z}$. Then there is a map
$$
\mathbb{L}{\rm agr}(f_1,\pi_X^*\omega_{\bf X}-\pi_Y^*\omega_{\bf Y})\times\mathbb{L}{\rm agr}(f_2,\pi_Y^*\omega_{\bf Y}-\pi_Z^*\omega_{\bf Z})
\longrightarrow \mathbb{L}{\rm agr}(g,\pi_X^*\omega_{\bf X}-\pi_Z^*\omega_{\bf Z})\,,
$$
where $g:{\bf L}_{12}:={\bf L}_1\overset{h}{\underset{\bf Y}{\times}}{\bf L}_2\to {\bf X}\times{\bf Z}$. 
\end{Thm}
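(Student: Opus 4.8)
The strategy is to realise this statement as an instance of Theorem~\ref{thm-derint} (shifted symplectic structures on derived Lagrangian intersections), exactly as Theorem~\ref{thm-derint} itself was reduced to self-intersections. The key observation is that a Lagrangian morphism to $\mathbf{X}\times\mathbf{Y}$ with the symplectic structure $\pi_X^*\omega_{\mathbf{X}}-\pi_Y^*\omega_{\mathbf{Y}}$ should be thought of as a Lagrangian correspondence from $\mathbf{X}$ to $\mathbf{Y}$, and composition of such correspondences is expected to be a fibre product over $\mathbf{Y}$. So the first step is to rewrite $\mathbf{L}_{12}=\mathbf{L}_1\times^h_{\mathbf{Y}}\mathbf{L}_2$ as a homotopy fibre product of the form $(\mathbf{L}_1\times\mathbf{L}_2)\times^h_{\mathbf{Y}\times\mathbf{Y}}\mathbf{Y}$, i.e.\ as the intersection inside $\mathbf{Y}\times\overline{\mathbf{Y}}$ of the Lagrangian $\mathbf{L}_1\times\mathbf{L}_2\to \mathbf{X}\times\mathbf{Y}\times\mathbf{Y}\times\mathbf{Z}$ (projected suitably) with the diagonal $\mathbf{Y}\to\mathbf{Y}\times\overline{\mathbf{Y}}$.

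\textbf{Step 1: set up the ambient symplectic stack.} I would work inside $\mathbf{X}\times\mathbf{Y}\times\overline{\mathbf{Y}}\times\mathbf{Z}$, equipped with the $n$-symplectic form $\pi_X^*\omega_{\mathbf{X}}-\pi_{Y_1}^*\omega_{\mathbf{Y}}+\pi_{Y_2}^*\omega_{\mathbf{Y}}-\pi_Z^*\omega_{\mathbf{Z}}$ (the two copies of $\mathbf{Y}$ carrying opposite signs). Both maps $f_1$ and $f_2$ induce a morphism $f_1\times f_2:\mathbf{L}_1\times\mathbf{L}_2\to \mathbf{X}\times\mathbf{Y}\times\overline{\mathbf{Y}}\times\mathbf{Z}$, and I claim that the product of the two given Lagrangian structures is a Lagrangian structure on this map for the above form: isotropy is additive in the obvious way (one adds the two paths of closed $2$-forms), and non-degeneracy is the statement that a direct sum of quasi-isomorphisms is a quasi-isomorphism. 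Simultaneously, the diagonal $\Delta:\mathbf{Y}\to\mathbf{Y}\times\overline{\mathbf{Y}}$ (and hence $\mathrm{id}_{\mathbf{X}}\times\Delta\times\mathrm{id}_{\mathbf{Z}}$ composed with a projection, or rather $\mathbf{X}\times\mathbf{Z}$-worth of it — more precisely the map $\mathbf{X}\times\mathbf{Y}\times\mathbf{Z}\to \mathbf{X}\times\mathbf{Y}\times\overline{\mathbf{Y}}\times\mathbf{Z}$ sending the middle $\mathbf{Y}$ diagonally) carries its standard Lagrangian structure relative to $\pi_X^*\omega_{\mathbf{X}}-\pi_Z^*\omega_{\mathbf{Z}}+(\text{diagonal of }\mathbf{Y}\times\overline{\mathbf{Y}})$; here the standard Lagrangian structure on a diagonal is the one recalled in the Introduction (example of $\Sigma=\interval$).

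\textbf{Step 2: intersect and read off the structure.} By Theorem~\ref{thm-derint} applied in the $n$-symplectic stack $\mathbf{X}\times\mathbf{Y}\times\overline{\mathbf{Y}}\times\mathbf{Z}$ to the two Lagrangians just described, their homotopy fibre product inherits an $n$-symplectic structure — wait, that is not quite what we want, since we want a \emph{Lagrangian} structure on $g$, not a symplectic structure on the source. The correct move is instead to use the \emph{relative} mechanism: one of the two pieces, say the diagonal factor, should be kept as the ``ambient'' direction. Concretely, I would factor the claim through Theorem~\ref{thm-maplag2}-type reasoning by viewing $g$ as obtained from $f_1\times f_2$ by base change along the Lagrangian $\mathbf{X}\times\mathbf{Y}\times\mathbf{Z}\to\mathbf{X}\times\mathbf{Y}\times\overline{\mathbf{Y}}\times\mathbf{Z}$, and invoking the fact (implicit in \cite{PTVV} and used in the proof of Theorem~\ref{thm-derint}) that the base change of a Lagrangian along a Lagrangian is again Lagrangian, over the residual symplectic stack $\mathbf{X}\times\mathbf{Z}$. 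The residual symplectic form is exactly $\pi_X^*\omega_{\mathbf{X}}-\pi_Z^*\omega_{\mathbf{Z}}$, and the homotopy fibre product $(\mathbf{L}_1\times\mathbf{L}_2)\times^h_{\mathbf{Y}\times\overline{\mathbf{Y}}}\mathbf{Y}$ is canonically $\mathbf{L}_1\times^h_{\mathbf{Y}}\mathbf{L}_2=\mathbf{L}_{12}$, with the resulting map to $\mathbf{X}\times\mathbf{Z}$ being $g$. Assembling isotropy and non-degeneracy data then produces the desired map on Lagrangian structure spaces.

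\textbf{Main obstacle.} The genuinely delicate point is verifying non-degeneracy of the composed isotropic structure, i.e.\ that the relevant morphism $\mathbb{T}_g\to\mathbb{L}_{\mathbf{L}_{12}}[n-1]$ is a quasi-isomorphism. This amounts to a tangent-complex computation: one has the fibre sequence relating $\mathbb{T}_{\mathbf{L}_{12}}$ to $\mathbb{T}_{\mathbf{L}_1}\oplus\mathbb{T}_{\mathbf{L}_2}$ and $\mathbb{T}_{\mathbf{Y}}$, and one must check that under the maps induced by $\omega_{\mathbf{X}}$, $\omega_{\mathbf{Y}}$, $\omega_{\mathbf{Z}}$ and the three Lagrangian conditions, the octahedron/nine-lemma diagram of fibre sequences (the homotopical analogue of the diagram appearing in the proof of Theorem~\ref{thm-cospans}) forces the outer map to be an equivalence because the inner ones are. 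Writing this diagram carefully and tracking the shifts and signs — in particular making sure the opposite symplectic structure on the middle $\mathbf{Y}$ is what makes the diagonal Lagrangian — is the crux; everything else is formal manipulation of paths of closed $2$-forms and fibre products of stacks.
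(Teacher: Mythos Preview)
Your ``Main obstacle'' paragraph is essentially the paper's entire proof, and it is correct. The paper proceeds directly: an isotropic structure on $g$ is obtained by concatenating three paths of closed $2$-forms on $\mathbf{L}_{12}$ (the pullback of the path on $\mathbf{L}_1$ from $f_1^*\pi_X^*\omega_{\mathbf X}$ to $f_1^*\pi_Y^*\omega_{\mathbf Y}$, the canonical homotopy between the two pullbacks of $\omega_{\mathbf Y}$ coming from the fibre product, and the pullback of the path on $\mathbf{L}_2$), exactly mirroring the proof of Theorem~\ref{thm-cospans}. Non-degeneracy is then the fibre-sequence diagram you describe, with $\mathbb{T}_{\mathbf{L}_{12}}\to\mathbb{T}_{\mathbf{L}_1}\oplus\mathbb{T}_{\mathbf{L}_2}\to\mathbb{T}_{\mathbf{Y}}$ on top and $\mathbb{L}_g[n-1]\to(\mathbb{L}_{f_1}\oplus\mathbb{L}_{f_2})[n-1]\to\mathbb{L}_{\mathbf{Y}}[n]$ on the bottom; the two right-hand vertical maps are equivalences by the Lagrangian hypotheses and the symplectic structure on $\mathbf{Y}$, hence so is the left one.

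The detour in your Steps~1--2 is unnecessary and, as stated, circular. The principle you invoke---``the base change of a Lagrangian along a Lagrangian is again Lagrangian over the residual symplectic stack''---is not a consequence of Theorem~\ref{thm-derint} (which only produces \emph{symplectic} structures on intersections, not Lagrangian structures on maps out of them); it is precisely the present theorem in the special case $\mathbf{X}=*$. So it cannot serve as input. You also noticed this yourself mid-argument (``wait, that is not quite what we want''). Once you discard that reduction, what remains---the additivity of isotropic structures for $f_1\times f_2$, together with the fibre-sequence check for non-degeneracy---\emph{is} the direct argument, and matches the paper.
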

\begin{proof}
The proof is very similar to the one of Theorem \ref{thm-cospans}. 
We first observe that the result we want to prove holds for isotropic structures. Namely, a Lagrangian structure on $f_1$ is the same as 
a path from $f_1^*\pi_X^*\omega_X$ to $f_1^*\pi_Y^*\omega_Y$ in $\Omega^{2,cl}_{{\bf L}_1}$, a Lagrangian structure on $f_2$ is 
the same as a path from $f_2^*\pi_Y^*\omega_Y$ to $f_2^*\pi_Z^*\omega_Z$ in $\Omega^{2,cl}_{{\bf L}_2}$, and we have a natural path 
from $\pi_{L_1}^*f_1^*\pi_Y^*\omega_{\bf Y}$ to $\pi_{L_2}^*f_2^*\pi_Y^*\omega_{\bf Y}$ in $\Omega^{2,cl}_{{\bf L}_{12}}$. 
Hence we get a path from $\pi_{L_1}^*f_1^*\pi_X^*\omega_X$ to $\pi_{L_2}^*f_2^*\pi_Z^*\omega_Z$ in 
$\Omega^{2,cl}_{{\bf L}_{12}}$, which defines an isotropic structure on $g$. 

We then have to prove that it is non-degenerate. As before, we invite the reader to contemplate a homotopy commuting diagram 
in which all rows are exact: \\
$
\vcenter{\xymatrix{
~~ & ~~ & \mathbb{T}_{{\bf L}_{12}}\ar[r]\ar[d] & \mathbb{T}_{{\bf L}_{1}}\oplus \mathbb{T}_{{\bf L}_{2}}\ar[r]\ar[d]^{\wr} & 
\mathbb{T}_{{\bf Y}}\ar[d]^{\wr}\ar[r]^{~~~+1} &  \\
~~ & ~~ & \mathbb{L}_g[n-1]\ar[r] & (\mathbb{L}_{f_1}\oplus\mathbb{L}_{f_2})[n-1] \ar[r]&
\mathbb{L}_{{\bf Y}}[n]\ar[r]^{~~~+1} & 
}}
$
\end{proof}
Let $LagCorr_n$ be the category with objects being $n$-symplectic derived Artin stacks and ${\rm Hom}_{LagCorr_n}({\bf X},{\bf Y})$ 
being weak equivalence classes of Lagrangian morphisms ${\bf L}\to{\bf X}\times\overline{{\bf Y}}$, where $\overline{{\bf Y}}$ is 
${\bf Y}$ endowed with the opposite symplectic structure. The composition of two morphisms is their homotopy fiber product, as in 
\S\ref{sss-lagcorr}, endowed with the Lagrangian structure given by Theorem \ref{thm-lagcorr}. 

It again has a symmetric monoidal structure, with monoidal product the Cartesian product. 

\begin{Exa}
Let $f:{\bf X}\to {\bf Y}$ be a morphism between derived $n$-symplectic stacks equiped with a symplectomorphism structure, that is to say a 
Lagrangian structure on the graph ${\bf X}\to {\bf X}\times{\bf Y}$, where ${\bf X}\times{\bf Y}$ is equipped with the difference of the symplectic structures on 
${\bf X}$ and ${\bf Y}$ (i.e.~a non-degenerate path from $f^*\omega_Y$ to $\omega_X$ in $\mathbb{S}{\rm ymp}(X,n)$).  
\begin{Prop}
For any morphism $g:{\bf L}\to {\bf Y}$ we have a map 
$$
\mathbb{L}{\rm agr}(g,\omega_Y)\longrightarrow\mathbb{L}{\rm agr}(\pi_1,\omega_X),
$$
where $\pi_1:{\bf Z}:={\bf X}\overset{h}{\underset{{\bf Y}}{\times}}{\bf L}\to{\bf X}$ is the ``first projection''. 
\end{Prop}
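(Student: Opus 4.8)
The plan is to deduce the Proposition directly from the composition law for Lagrangian correspondences, Theorem \ref{thm-lagcorr}, by taking the point as one of the three symplectic stacks occurring there. The idea is that $g$ (a Lagrangian in ${\bf Y}$) and the symplectomorphism structure on $f$ (a Lagrangian correspondence between ${\bf Y}$ and ${\bf X}$) become composable Lagrangian correspondences $\bullet_{(n)}\leftrightarrow{\bf Y}$ and ${\bf Y}\leftrightarrow{\bf X}$, and their composite is precisely a Lagrangian in ${\bf X}$ supported on ${\bf Z}$.

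First I would record two elementary facts. One: $\bullet_{(n)}$, the point with the zero $2$-form of degree $n$, is $n$-symplectic — its cotangent complex vanishes, so non-degeneracy and closedness are automatic — hence it is a legitimate input for Theorem \ref{thm-lagcorr}. Two: for any morphism $h:{\bf W}\to{\bf S}$ into an $n$-symplectic stack there is a natural equivalence $\mathbb{L}{\rm agr}(h,\eta)\simeq\mathbb{L}{\rm agr}(h,-\eta)$, since negation is a self-equivalence of $\mathcal A^{2,cl}({\bf W},n)$ fixing $0$ (hence an equivalence on spaces of isotropic structures), and it is compatible with the non-degeneracy condition via the canonical identification $\mathrm{fib}(u)\cong\mathrm{fib}(-u)$.

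Then I would apply Theorem \ref{thm-lagcorr} with the rôles of ${\bf X},{\bf Y},{\bf Z}$ in that theorem played respectively by $\bullet_{(n)}$, ${\bf Y}$, and ${\bf X}$; with ${\bf L}_1:={\bf L}$ and $f_1:=g:{\bf L}\to\bullet_{(n)}\times{\bf Y}={\bf Y}$; and with ${\bf L}_2:={\bf X}$ and $f_2:=(f,\mathrm{id}_{\bf X}):{\bf X}\to{\bf Y}\times{\bf X}$. A Lagrangian structure on $f_1$ for the form $-\pi_{\bf Y}^*\omega_Y$ is exactly an element of $\mathbb{L}{\rm agr}(g,-\omega_Y)\simeq\mathbb{L}{\rm agr}(g,\omega_Y)$, which is our input; and a Lagrangian structure on $f_2$ for $\pi_{\bf Y}^*\omega_Y-\pi_{\bf X}^*\omega_X$ is obtained from the symplectomorphism structure — a Lagrangian structure on the graph $(\mathrm{id}_{\bf X},f):{\bf X}\to{\bf X}\times{\bf Y}$ for $\pi_{\bf X}^*\omega_X-\pi_{\bf Y}^*\omega_Y$ — by transporting it along the swap equivalence ${\bf X}\times{\bf Y}\simeq{\bf Y}\times{\bf X}$, which sends $\pi_{\bf X}^*\omega_X-\pi_{\bf Y}^*\omega_Y$ to the opposite of $\pi_{\bf Y}^*\omega_Y-\pi_{\bf X}^*\omega_X$, and then invoking sign-insensitivity. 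Theorem \ref{thm-lagcorr} then delivers a Lagrangian structure for $-\omega_X$ (equivalently $\omega_X$) on the induced map ${\bf L}\overset{h}{\underset{\bf Y}{\times}}{\bf X}\to\bullet_{(n)}\times{\bf X}={\bf X}$, and it remains only to identify ${\bf L}\overset{h}{\underset{\bf Y}{\times}}{\bf X}\simeq{\bf X}\overset{h}{\underset{\bf Y}{\times}}{\bf L}={\bf Z}$ and to check that this induced map is the composite ${\bf Z}\to{\bf L}_2={\bf X}\xrightarrow{\mathrm{pr}_2\circ f_2=\mathrm{id}_{\bf X}}{\bf X}$, i.e.\ that it is the first projection $\pi_1$.

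I do not expect any essential obstacle here: Theorem \ref{thm-lagcorr} does all the work, and what remains is bookkeeping — verifying that $\bullet_{(n)}$ really is an $n$-symplectic object, keeping track of the sign and swap normalizations for the two input Lagrangian structures, and checking that the correspondence produced has underlying map $\pi_1$ rather than $\pi_2$. If one preferred to avoid invoking $LagCorr_n$, one could instead mimic the proof of Theorem \ref{thm-lagcorr} directly: concatenate the path defining the Lagrangian structure on $g$, the path coming from the symplectomorphism structure, and the canonical path around the fibre product, and then verify non-degeneracy by contemplating the evident exact triangle relating $\mathbb{T}_{\pi_1}$, $\mathbb{T}_g$ and $\mathbb{T}_f$; but the route through Theorem \ref{thm-lagcorr} is shorter and more transparent.
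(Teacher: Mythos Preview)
Your proof is correct and follows exactly the paper's approach: the paper's entire proof is the sentence ``This is a direct corollary of Theorem \ref{thm-lagcorr}.'' Your version simply spells out the bookkeeping (choosing $\bullet_{(n)}$ as one of the three symplectic stacks, handling the sign and swap conventions, and identifying the resulting map with $\pi_1$) that the paper leaves implicit.
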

\begin{proof}
This is a direct corollary of Theorem \ref{thm-lagcorr}. 
\end{proof}
\end{Exa}
\begin{Exa}
Let $H,G$ be reductive algebraic groups and let $X$ be a quasi-Hamiltonian $G^{\times2}\times H$-space: there is an equivariant map 
$X\to G^{\times2}\times H$ such that $[X/G^{\times2}\times H]\to [G^{\times2}\times H/G^{\times2}\times H]$ has a Lagrangian structure. 
Observe that the diagonal map $G\to G\times G$ gives rise to two morphisms 
$$
[G\times G/G]\to [G/G]\quad\textrm{and}\quad[G\times G/G]\to [G\times G/G\times G]\,.
$$
One can prove that their product $[G\times G/G]\longrightarrow [G/G]\times \overline{[G\times G/G\times G]}$ carries a Lagrangian structure, 
and thus so does $[G^2\times H/G\times H]\longrightarrow [G\times H/G\times H]\times \overline{[G^2\times H/G^2\times H]}$. 

From Theorem \ref{thm-lagcorr} we get that 
$$
[X/G\times H]=[G^2\times H/G^2\times H]\underset{[G^2\times H/G\times H]}{\overset{h}{\times}}[X/G^2\times H]\longrightarrow [G\times H/G\times H]
$$
carries a Lagrangian structure. Following \cite[Section 6]{AMM} we call it the {\it internal fusion} of $X$, which consists of 
a quasi-Hamiltonian structure on the composed map $X\to G^2\times H\to G\times H$. This gives a new interpretation of \cite[Theorem 6.1]{AMM}. 
We refer to \cite{Safr} for more details about the relevance of derived symplectic and Lagrangian structures for quasi-Hamiltonian geometry. 
\end{Exa}

\subsubsection{Oriented TFTs with values in Lagrangian correspondences}

We fix an $n$-symplectic stack $({\bf Y},\omega)$. 
\begin{Thm}\label{thm-tft}
The symmetric monoidal functor ${\bf Map}\big((-)_B,{\bf Y}\big):Cosp\to Corr$ can be lifted to a symmetric monoidal functor 
$Cosp^{d\textrm{-}or}\to LagCorr_{n-d}$. 
\end{Thm}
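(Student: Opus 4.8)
The plan is to lift $\Map\big((-)_B,{\bf Y}\big)$ datum by datum, using Theorem~\ref{thm-mapping} to endow objects with symplectic structures, Theorem~\ref{thm-maplag} to endow morphisms with Lagrangian structures, and the comparison of Theorem~\ref{thm-lagcorr} with Theorem~\ref{thm-cospans} to establish functoriality. First I would define the lift on objects: a $d$-oriented compact space $\mathbb{X}$ goes to the derived Artin stack $\Map(\mathbb{X}_B,{\bf Y})$ equipped with the $(n-d)$-symplectic structure $\widetilde\omega_{\mathbb{X}}:=\int_{[\mathbb{X}_B]}ev^*\omega$ of Theorem~\ref{thm-mapping}, which applies because $\mathbb{X}_B$ is $\mathcal O$-compact and the fundamental class $[\mathbb{X}]\in H_d(\mathbb{X},\kk)$ is a $d$-orientation by oriented Poincar\'e duality. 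One checks at once that disjoint union is sent to Cartesian product, that $\overline{\mathbb{X}}$ (opposite fundamental class) is sent to $\overline{\Map(\mathbb{X}_B,{\bf Y})}$ since transgressing against $-[\mathbb{X}]$ negates the $2$-form, and that $\emptyset$ is sent to $\bullet_{(n-d)}$; this already handles the symmetric monoidal structure on objects.

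Next, on morphisms: a $d$-oriented cospan is a map $f\colon\mathbb{X}\coprod\overline{\mathbb{Y}}\to\mathbb{F}$ carrying a relative $d$-orientation, i.e.\ the $d$-orientation of $\Upsilon:=(\mathbb{X}\coprod\overline{\mathbb{Y}})_B$ together with a non-degenerate boundary structure on $f_B\colon\Upsilon\to\mathbb{F}_B=:\Sigma$. Theorem~\ref{thm-maplag} then equips the restriction morphism
\[
rest\colon\Map(\mathbb{F}_B,{\bf Y})\longrightarrow\Map(\Upsilon,{\bf Y})\simeq\Map(\mathbb{X}_B,{\bf Y})\times\overline{\Map(\mathbb{Y}_B,{\bf Y})}
\]
with a Lagrangian structure for the difference of the transgressed symplectic structures, hence with a morphism of $LagCorr_{n-d}$ from $\Map(\mathbb{X}_B,{\bf Y})$ to $\Map(\mathbb{Y}_B,{\bf Y})$. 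I would then observe that a weak equivalence of $d$-oriented cospans induces a weak equivalence of the associated Lagrangian correspondences, by naturality of $\Map(-,{\bf Y})$ and of transgression, so the assignment descends to $\mathrm{Hom}$-sets; and that the codiagonal $\mathbb{X}\coprod\overline{\mathbb{X}}\to\mathbb{X}$ with its tautological relative $d$-orientation is sent to the diagonal $\Map(\mathbb{X}_B,{\bf Y})\to\Map(\mathbb{X}_B,{\bf Y})\times\overline{\Map(\mathbb{X}_B,{\bf Y})}$ with its standard Lagrangian structure, i.e.\ to the identity of $LagCorr_{n-d}$ (this is example~1 of the introduction).

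The heart of the matter, and the step I expect to be the main obstacle, is compatibility with composition. Given composable $d$-oriented cospans $f$ and $g$ as in Theorem~\ref{thm-cospans}, their composite is $t\colon\mathbb{X}\coprod\overline{\mathbb{Z}}\to\mathbb{T}:=\mathbb{F}\overset{h}{\underset{\mathbb{Y}}{\coprod}}\mathbb{G}$ with the glued $d$-orientation constructed there. On underlying stacks there is nothing to do: $(-)_B$ carries the homotopy pushout to a homotopy pushout and $\Map(-,{\bf Y})$ turns it into a homotopy fiber product, so the image of $t$ is the composite correspondence $\Map(\mathbb{F}_B,{\bf Y})\overset{h}{\underset{\Map(\mathbb{Y}_B,{\bf Y})}{\times}}\Map(\mathbb{G}_B,{\bf Y})$. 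What must be checked is that the Lagrangian structure on it coming from applying Theorem~\ref{thm-maplag} to $t$ coincides with the one obtained by composing the two Lagrangian correspondences via Theorem~\ref{thm-lagcorr}. Both are built as concatenations of three homotopies of closed $2$-forms, and the transgression operation $\int_{[\cdot]}ev^*$ is natural and commutes with concatenation of paths; moreover it carries the Mayer--Vietoris cofiber sequence computing $\Gamma(\mathbb{T}_B,-)$ into the exact sequence of tangent complexes $\mathbb{T}_{{\bf L}_{12}}\to\mathbb{T}_{{\bf L}_1}\oplus\mathbb{T}_{{\bf L}_2}\to\mathbb{T}_{\Map(\mathbb{Y}_B,{\bf Y})}$ used in the proof of Theorem~\ref{thm-lagcorr}. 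Thus I would argue that the large commuting diagram with exact rows in the proof of Theorem~\ref{thm-lagcorr} is, after the identifications above, the image under $\int_{[\cdot]}ev^*$ of the diagram in the proof of Theorem~\ref{thm-cospans}, so the two boundary/isotropic structures agree and non-degeneracy of the composite is then automatic by Theorem~\ref{thm-maplag}.

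Finally, symmetric monoidality of the lifted functor follows from the object-level discussion together with the compatibility of $\Map$, of transgression, and of the gluing constructions of Theorems~\ref{thm-maplag} and~\ref{thm-lagcorr} with disjoint unions and Cartesian products. The genuinely delicate point is the naturality-of-transgression argument just sketched, where one must match the Mayer--Vietoris decompositions on the space side and the stack side through $\int_{[\cdot]}ev^*$; everything else reduces to sign and orientation bookkeeping and to invoking the representability and functoriality statements already recalled in the excerpt.
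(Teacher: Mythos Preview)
Your proposal is correct and follows essentially the same route as the paper: objects via Theorem~\ref{thm-mapping}, morphisms via Theorem~\ref{thm-maplag}, and compatibility with composition by matching the three-path concatenations in Theorems~\ref{thm-cospans} and~\ref{thm-lagcorr} through transgression. In fact you spell out in detail the composition check that the paper dispatches with the single phrase ``it is very easy to check from the construction that this is compatible with composition of morphisms''; your observation that the Mayer--Vietoris diagram in the proof of Theorem~\ref{thm-cospans} is carried by $\int_{[\cdot]}ev^*$ to the exact-row diagram in the proof of Theorem~\ref{thm-lagcorr} is precisely the content the paper leaves implicit.
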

As a consequence we get that the functor $\mathcal Z_{\bf Y}:dCob\to Corr$ can be lifted to a symmetric monoidal functor 
$\mathcal Z_{\bf Y}^{or}:dCob^{or}\to LagCorr_{n-d}$, which means that we have a commuting diagram
$$
\xymatrix{
dCob^{or}\ar@/^2pc/[rrr]^{\mathcal Z_{\bf Y}^{or}}\ar[r]^{\mathcal F^{or}}\ar[d] & Cosp^{d\textrm{-}or}\ar[d]\ar[rr] && \ar[d]LagCorr_{n-d} \\
dCob\ar@/_1pc/[rrr]_{\mathcal Z_{\bf Y}}\ar[r]^{\mathcal F} & Cosp \ar[rr]^{{\bf Map}\big((-)_B,{\bf Y}\big)} && Corr
}
$$
\begin{proof}[Proof of Theorem \ref{thm-tft}]
On objects, if $\mathbb{X}$ is $d$-oriented then by Theorem \ref{thm-mapping} we have an $(n-d)$-symplectic structure on the mapping stack 
${\bf Map}\big((\mathbb{X})_B,{\bf Y}\big)$. On morphisms, if $\mathbb{X}\coprod\overline{\mathbb{Y}}\to\mathbb{F}$ carries a relative 
$d$-orientation then by Theorem \ref{thm-maplag} the morphism 
$$
{\bf Map}\big((\mathbb{F})_B,{\bf Y}\big)\longrightarrow{\bf Map}\big((\mathbb{X}\coprod\overline{\mathbb{Y}})_B,{\bf Y}\big)=
{\bf Map}\big((\mathbb{X})_B,{\bf Y}\big)\times\overline{{\bf Map}\big((\mathbb{Y})_B,{\bf Y}\big)}
$$
inherits a Lagrangian structure. It is very easy to check from the construction that this is compatible with composition of morphisms. 
\end{proof}


\section*{Concluding remarks}
\addcontentsline{toc}{section}{Concluding remarks}

In a subsequent work we will show how the above constructions can lead to fully extended oriented TFTs in the sense of Baez-Dolan and Lurie 
(see \cite{BD,Lu1}), as well as to theorie with boundary and/or defects. We aslo conjecture that our approach could provide a rigorous 
construction of the $2$ dimensional TFT with values in holomorphic symplectic manifolds that has been discovered by Moore-Tachikawa \cite{MT}. 

Below we sketch the construction of semi-classical TFTs with boundary conditions from relative derived mapping stacks (see also \cite{Cal-survey}). 

\subsection*{TFTs with boundary conditions}

We briefly explain how can one extend our methods to the construction of TFTs in the presence of boundary conditions. 

\subsubsection*{The category of cobordisms with boundary}

One can introduce the category $dCob^{(or)}_{bnd}$ with objects being (oriented) compact $d$-dimensional manifolds with boundary, 
and morphisms being diffeomorphisms classes of (oriented) $(d+1)$-dimensional cobordisms with boundary. The disjoint union turns it into 
a symmetric monoidal category. An {\it (oriented) topological field theory with boundary} is a symmetric monoidal functor 
$dCob^{(or)}_{bnd}\to \mathcal C$. 

\subsubsection*{An oriented TFT with boundary from relative derived mapping stacks}

If we fix an $n$-symplectic stack and a Lagrangian morphism $f:{\bf L}\to{\bf Y}$ then we can construct a symmetric monoidal functor 
$\mathcal Z_f^{or}:dCob^{or}_{bnd}\longrightarrow LagCorr_{n-d}$ which does the following on objects: if $\Sigma$ is an oriented 
compact $d$-dimensional manifolds with boundary then the inclusion map $\iota_\Sigma:\partial\Sigma\to\Sigma$ is relatively $(d-1)$-oriented 
and we define $\mathcal Z_f^{or}(\Sigma):=\Map\big((\iota_\Sigma)_B,f\big)$, which naturally carries an $(n-d)$-symplectic structure by 
Theorem \ref{thm-2.13}. 

One way to prove this is first to construct the following three oriented TFTs with boundary: \\[-0.5cm]
\begin{itemize}
\item $\mathcal Z_1:\Sigma\mapsto\Map\big(\Sigma_B,f\big)$, which takes values in a category $\mathcal C$ where objects themselves are 
Lagrangian morphisms (and morphisms are weak equivalence classes of correspondences between them). \\[-0.6cm]
\item $\mathcal Z_2:\Sigma\mapsto\Map\big((\iota_\Sigma)_B,{\bf Y}\big)$, which also takes values in $\mathcal C$. \\[-0.6cm]
\item $\mathcal Z_3:\Sigma\mapsto\Map\big(\big(\partial\Sigma)_B,{\bf Y})$, which takes values in $LagCorr_{n-d+1}$. \\[-0.5cm]
\end{itemize}
Then one observes that there is a forgetful functor $\mathcal F:\mathcal C\to LagCorr_{n-d+1}$ and that 
$\mathcal F\circ\mathcal Z_1=\mathcal Z_3=\mathcal F\circ\mathcal Z_2$. Moreover, the derived fiber product of Lagrangian morphisms 
gives a functor 
$$
\mathcal G:\mathcal C\!\!\!\underset{LagCorr_{n-d+1}}{\times}\!\!\!\mathcal C\to LagCorr_{n-d}\,,
$$
so that in the end $\mathcal Z_f^{or}=\mathcal G\circ\big(\mathcal Z_1\times\mathcal Z_2\big)$. 

\medskip

The best way to make sense of this is to work with the higher categories of (iterated) Lagrangian correspondences that have been informally 
introduced at the beginning of the paper. 


\newcommand{\bysame}{\leavevmode\hbox to3em{\hrulefill}\,}

\end{document}